\newtheorem{theorem}{Theorem}[section]
\newtheorem{proposition}[theorem]{Proposition}
\newtheorem{corollary}[theorem]{Corollary}
\newtheorem{lemma}[theorem]{Lemma}
\theoremstyle{definition}
\newtheorem{remark}[theorem]{Remark}
\newtheorem{definition}[theorem]{Definition}
\newtheorem{fact}[theorem]{Fact}
\newtheorem{question}[theorem]{Question}
\newcommand{\Z}{\mathbb{Z}}
\newcommand{\N}{\mathbb{N}}
\DeclareMathOperator{\res}{\restriction}
\def\Z{{\mathbb Z}}
\def\N{{\mathbb N}}
\def\nbd{neighbourhood}
\begin{document}

\title[Lattice of Group Topologies]{On The Lattice of Group Topologies}

\author{Dekui Peng}
\address[D. Peng]
	{\hfill\break Institute of Mathematics,
		\hfill\break Nanjing Normal University, 210024,
		\hfill\break China}
\email{pengdk10@lzu.edu.cn}
\keywords{Lattice of topologies; Semi-modular lattice; Finite chain; Nilpotent group}
\subjclass[2020]{Primary 54A10, 22A05; Secondary 06C10}

\begin{abstract}
For an infinite group $G$, the poset $\mathcal{L}_G$ of group topologies constitutes a complete lattice. Although $\mathcal{L}_G$ is modular when $G$ is abelian, this property fails to persist for nilpotent groups. Extending Arnautov's 2010 work on the semi-modularity of $\mathcal{L}_G$ for nilpotent groups, we present an alternative proof with enhanced structural clarity. Additionally, we resolve two open questions from the Kourovka Notebook regarding lattice-theoretic properties of $\mathcal{L}_G$: (1) explicit construction of a countably infinite non-abelian nilpotent group with modular topology lattice, and (2) establishing the absence of property $P_2$ in infinite abelian groups.
\end{abstract}

\maketitle
\section{Introduction}

For a non-empty set $X$, the collection of all (not necessarily Hausdorff) topologies on $X$ forms a complete lattice, ordered naturally by inclusion: $\tau \leq \sigma$ if and only if $\sigma$ is finer than $\tau$.
This lattice is sometimes referred to as the {\em toplattice} of $X$.
The study of toplattices was initiated by G. Birkhoff \cite{Birk}, who established some of their fundamental properties.
The subset $\mathcal{T}'_X$ of $T_1$ topologies in $\mathcal{T}_X$ is a sublattice, which is also complete.
In 1947, R. Vaidyanathaswamy \cite{Vaid} demonstrated that $\mathcal{T}_X$ fails to be distributive whenever $|X| \geq 3$.
Later, R. Bagley \cite{Bag} extended this result by proving in his 1954 PhD thesis that $\mathcal{T}'_X$ is not even modular.
However, a consequence of Ore’s theorem \cite[Theorem 13]{Ore} is that the lattice $\mathcal{T}_X$ satisfies the dual Birkhoff property, meaning that if $\tau$ and $\sigma$ are both covered by $\tau \vee \sigma$, then they both cover $\tau \wedge \sigma$.
In 1972, R. Larson and W. Thron established that $\mathcal{T}'_X$ satisfies both the Birkhoff property and its dual \cite{LT}.

For an abstract group $G$, the subset $\mathcal{L}_G$ of $\mathcal{T}_G$ consisting of all group topologies is of particular interest.
Equipped with the induced order from $\mathcal{T}_G$, the structure $\mathcal{L}_G$ forms a complete lattice, though it is merely a $\vee$-sub-semilattice of $\mathcal{T}_G$ and {\bf not} a sublattice.
The study of $\mathcal{L}_G$ has a rich history, with significant attention given to (non-)topologizable groups (i.e., groups admitting or failing to admit non-discrete Hausdorff group topologies) and minimal topological groups.
For a comprehensive survey on this subject, we refer the reader to \cite{Di} and \cite{HRT}.
In particular, Section 3 of \cite{HRT} selects fruitful results of Comfort concerning certain subsets of $\mathcal{L}_G$.
It is well known that $\mathcal{L}_G$ is modular when $G$ is abelian \cite{Lam}.
In this case, it has been established in \cite{Kil} that $\mathcal{L}_G$ contains precisely $2^{2^{|G|}}$ distinct Hausdorff group topologies, achieving the maximum possible number.
This result was extended by Dikranjan and Protasov \cite{Dik}, who proved that $G$ also has $2^{2^{|G|}}$ maximal group topologies (i.e., coatoms in $\mathcal{L}_G$ that are Hausdorff), which in turn implies that the width of $\mathcal{L}_G$ is $2^{2^{|G|}}$.
The first counterexample providing a group $G$ with non-modular $\mathcal{L}_G$ was obtained by Arnautov and Topal\u{a} \cite{ArTop}, who proved that a free group with at least two generators has non-modular lattice of group topologies.

In Section 14 of the renowned Kourovka Notebook, Arnautov formulated six questions concerning the lattice $\mathcal{L}_G$; see \cite[14.4 (a),(b) and 14.5 (a)-(d)]{KN}.
The problem 14.4 focuses on the modularity of $\mathcal{L}_G$ for nilpotent groups.
At the time of posing these questions, the situation was entirely unclear: no infinite non-abelian nilpotent group $G$ was known to have a modular or non-modular $\mathcal{L}_G$.
Thus, Arnautov asked \cite[14.4 (a) and (b)]{KN}:
\begin{itemize}
\item [(a)] {\it Is $\mathcal{L}_G$ modular for all nilpotent groups $G$? If not, then}
\item [(b)] {\it Does there exist a countably infinite nilpotent non-abelian group $G$ such that $\mathcal{L}_G$ is modular?}
\end{itemize}
Shortly after the problem was posed, Arnautov, together with Topal\u{a}, resolved (a) by providing a counterexample \cite{ATop}\footnote{While direct consultation of the source was unattainable, \cite{Ar2} substantiates that the proposed construction is rigorously demonstrated in the referenced work.}.
The first goal of this paper is to construct an example showing that the answer to (b) is affirmative.

\begin{theorem}\label{Th0}
There exists a countably infinite non-abelian nilpotent group $G$ such that $\mathcal{L}_G$ is modular.
\end{theorem}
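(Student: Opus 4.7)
The plan is to exhibit an explicit countably infinite non-abelian nilpotent group $G$ for which $\mathcal{L}_G$ splits as a product of two known modular lattices. My candidate is $G = A \oplus F$, with $F$ a finite non-abelian nilpotent group (for instance the quaternion group $Q_8$ or a Heisenberg group over a finite field), and $A$ an infinite abelian group chosen to be ``arithmetically independent'' from $F$. A clean way to impose this independence is to require $\mathrm{Hom}(A, F) = \mathrm{Hom}(F, A) = 0$ together with the non-isomorphism of any nontrivial subquotients; this is easily achieved by taking $A = \bigoplus_{n \in \N} \Z/q$ for a prime $q$ coprime to $|F|$. The resulting $G$ is countably infinite, nilpotent (as a direct product of nilpotent groups of bounded class), and non-abelian because $F$ is.

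The core of the argument is a decomposition lemma asserting that every $\tau \in \mathcal{L}_G$ is uniquely determined by a pair $(\tau|_A,\, K_\tau)$, where $\tau|_A \in \mathcal{L}_A$ is the induced subspace topology and $K_\tau \in \mathcal{N}(F)$ (the lattice of normal subgroups of $F$) is the ``$F$-part'' of the closure of $\{e\}$. My proof would pass through the subgroup structure of $G$: using Goursat's lemma together with the coprimality hypothesis, every subgroup of $A \oplus F$ has the rectangular form $A' \oplus F'$, since any isomorphism between a subquotient of $A$ (which is $q$-torsion) and a subquotient of $F$ (which has order coprime to $q$) must be trivial. Combined with the bounded exponent of $G$, which should force every group topology on $G$ to be linear (i.e.\ to admit a neighborhood base at $e$ consisting of open subgroups), this forces each basic open neighborhood of $e$ in $\tau$ to be a rectangle $U \oplus K$ with $K = K_\tau$ canonically attached to $\tau$.

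Granted the decomposition lemma, I would define
\[
\Phi : \mathcal{L}_G \longrightarrow \mathcal{L}_A \times \mathcal{N}(F),\qquad \tau \longmapsto (\tau|_A,\, K_\tau),
\]
and verify that $\Phi$ is an order-isomorphism whose inverse sends $(\sigma, K)$ to the topology on $G$ with neighborhood base $\{U \oplus K : U \in \sigma,\; 0 \in U\}$ at $e$. Modularity of $\mathcal{L}_G$ then follows at once: $\mathcal{L}_A$ is modular by \cite{Lam} since $A$ is abelian, $\mathcal{N}(F)$ is always modular (the normal subgroup lattice of any group is modular), and products of modular lattices are modular.

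The main obstacles lie in the decomposition lemma. I expect two delicate points: (i) verifying that every group topology on the non-abelian bounded-exponent group $G$ is linear, which is classical for the abelian part but requires care when non-abelian conjugation enters; and (ii) using the coprimality of $A$ and $F$ decisively enough to exclude ``twisted'' open subgroups coming from graphs of partial isomorphisms of subquotients. If either point resists, the construction admits natural refinements, for instance replacing $A$ with an $\mathbb{F}_q$-vector space of countable dimension or strengthening the numerical conditions relating $A$ and $F$, but the overall architecture --- reduce to a product of a known abelian $\mathcal{L}_A$ and the modular lattice $\mathcal{N}(F)$ --- should remain intact.
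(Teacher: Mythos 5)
Your choice of group is essentially the paper's: the paper also takes $G=H\times F$ with $H=\bigoplus_\omega\mathbb{Z}(p)$, $F$ finite non-abelian nilpotent, $\gcd(p,|F|)=1$, and proves exactly the decomposition you aim at, namely that every $\tau\in\mathcal{L}_G$ is a product topology, so that $\mathcal{L}_G\cong\mathcal{L}_H\times\mathcal{L}_F$ with $\mathcal{L}_F$ the (modular) lattice of normal subgroups of $F$ and $\mathcal{L}_H$ modular because $H$ is abelian. So the architecture and the final lattice-theoretic step (product of modular lattices is modular) are correct and coincide with the paper.

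The genuine gap is in your proposed proof of the decomposition lemma, specifically the claim that bounded exponent forces every group topology on $G$ to be linear (to have a base of open subgroups at $e$). This is false, and in fact it cannot be repaired, because $\mathcal{L}_G$ itself contains non-linear topologies: for example, take the group of (classes of) measurable functions $[0,1]\to\mathbb{Z}(q)$ with the topology of convergence in measure; the subgroup of step functions with dyadic breakpoints is a countable elementary abelian $q$-group carrying a non-discrete Hausdorff group topology with \emph{no} proper open subgroups (every element is a finite sum of elements of arbitrarily small support), and crossing it with any topology on $F$ gives a non-linear element of $\mathcal{L}_G$. Thus an argument that first reduces all basic neighborhoods to open subgroups and then invokes Goursat/rectangularity cannot establish the lemma; only the $F$-coordinate of a topology can be ``linearized'', not the $A$-coordinate. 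The paper gets exactly this weaker (and sufficient) statement without any linearity assumption: letting $N=\overline{\{e\}}$, coprimality (your Lemma-\ref{Le03}-style argument) gives $N=N_1\times N_2$ with $N_2\trianglelefteq F$; the map $g\mapsto g^p$ is a continuous endomorphism, so $K=\varphi^{-1}(N)=H\times N_2$ is a closed subgroup of finite index, hence open; and since every open $U\ni e$ contains $N$ (so is saturated under translation by $\{1\}\times N_2$), every open $U\subseteq K$ has the form $U_1\times N_2$. That yields $\tau=\tau_H\times\tau_F$ with $\tau_F$ generated by $\{N_2\}$, which is the decomposition you want, while the $H$-part remains an arbitrary (possibly non-linear) group topology. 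If you replace your linearity step by this ``collapse only the finite coordinate'' argument, the rest of your proposal (the isomorphism $\Phi$ onto $\mathcal{L}_A\times\mathcal{N}(F)$ and the modularity conclusion) goes through.
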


For chains $C$ and $D$ in a poset $(P, \leq)$, we say $C$ \emph{refines} $D$ (or $C$ is a \emph{refinement} of $D$) if $D \subseteq C$ and every $c \in C$ satisfies $d_1 \leq c \leq d_2$ for some $d_1,d_2 \in D$. A chain admitting no proper refinements is called \emph{non-refinable}. Given $a \leq b$ in $P$, the interval $[a, b]$ consists of all elements $c$ with $a \leq c \leq b$, where maximal chains correspond precisely to non-refinable chains between these endpoints. While Arnautov \cite{Ar2} investigated finite non-refinable chains (composed of covering relations), extended chain analyses appear in \cite{CD1, CD2, CD3}.

For nilpotent groups $G$, although $\mathcal{L}_G$ need not be modular, Arnautov established fundamental structure:

\begin{theorem}\label{Th00}\cite[Theorem 4.6]{Ar2}
For any nilpotent group $G$:
\begin{itemize}
\item[(i)] $\mathcal{L}_G$ is semi-modular;
\item[(ii)] All finite non-refinable chains between fixed endpoints in $\mathcal{L}_G$ have equal length.
\end{itemize}
\end{theorem}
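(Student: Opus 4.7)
The implication (i) $\Rightarrow$ (ii) is a classical consequence of semi-modularity: by a theorem of Birkhoff, every semi-modular lattice satisfies the Jordan--Dedekind chain condition, so between any two comparable elements all non-refinable finite chains have the same length. Hence the main task is to prove (i), which I would establish by induction on the nilpotency class $c$ of $G$.

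The base case $c = 1$ (that is, $G$ abelian) is the modularity theorem of Lamartin \cite{Lam}, which of course implies semi-modularity. For the inductive step with $G$ of class $c \geq 2$, set $Z := Z(G)$, so that $G/Z$ has nilpotency class $c-1$. Given $\tau_1, \tau_2 \in \mathcal{L}_G$ with $\tau_0 := \tau_1 \wedge \tau_2 \prec \tau_1$ (the meet being computed in $\mathcal{L}_G$, not in $\mathcal{T}_G$), the goal is to show $\tau_2 \prec \tau_1 \vee \tau_2$. The plan is to analyse this cover situation by pushing it through the central extension
\[ 1 \to Z \to G \to G/Z \to 1, \]
using that each group topology $\tau \in \mathcal{L}_G$ induces a quotient topology on $G/Z$ and restricts to one on $Z$, giving natural order-preserving maps $\mathcal{L}_G \to \mathcal{L}_{G/Z}$ and $\mathcal{L}_G \to \mathcal{L}_Z$.

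Concretely, I would work with the neighborhood filters $\mathcal{N}_\tau(e)$ at the identity, using that a base for $\tau_1 \vee \tau_2$ consists of intersections $U \cap V$ with $U \in \mathcal{N}_{\tau_1}(e)$, $V \in \mathcal{N}_{\tau_2}(e)$. If $\sigma \in \mathcal{L}_G$ satisfies $\tau_2 \leq \sigma \leq \tau_1 \vee \tau_2$, then both its image in $\mathcal{L}_{G/Z}$ and its trace on $\mathcal{L}_Z$ are squeezed between the corresponding images and traces of $\tau_2$ and $\tau_1 \vee \tau_2$. On the quotient side, the inductive hypothesis gives semi-modularity; on the center side, Lamartin's theorem gives outright modularity. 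Combining the two should force $\sigma \in \{\tau_2,\, \tau_1 \vee \tau_2\}$.

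The main obstacle is that $\mathcal{L}_G$ is not the direct product of $\mathcal{L}_Z$ and $\mathcal{L}_{G/Z}$: inner automorphisms of $G$ act non-trivially on neighborhood filters of $Z$, and it is precisely this action that is the source of non-modularity for nilpotent $G$ of class $\geq 2$. The technical heart of the argument is therefore a gluing lemma showing that, under the cover hypothesis $\tau_0 \prec \tau_1$, the conjugation data carried by a putative intermediate $\sigma$ is rigid enough --- by nilpotency of class $c$ together with the inductive control on the quotient --- to match one of the two extremes. Making this gluing step functorial rather than ad hoc is where the "enhanced structural clarity" over Arnautov's original argument would come from, and it is where I would expect to spend the bulk of the work.
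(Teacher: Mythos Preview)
Your overall strategy matches the paper's: induct on the nilpotency class, reduce via the central extension $1\to Z\to G\to G/Z\to 1$, and combine modularity of $\mathcal{L}_Z$ with the inductive hypothesis on $\mathcal{L}_{G/Z}$. The paper packages the inductive step as a stand-alone result (Theorem~\ref{SemiMod}: for $N$ central, $\mathcal{L}_G$ is semi-modular iff $\mathcal{L}_{G/N}$ is) and then applies it with $N=Z(G)$.

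Where your sketch has a real gap is in the ``gluing lemma''. First, the diagnosis is off: since $Z$ is the centre, inner automorphisms act \emph{trivially} on $Z$, so there is no ``conjugation data on neighbourhood filters of $Z$'' to track; the genuine obstacle is simply that the pair $(\tau\res_Z,\,\tau/Z)$ does not determine $\tau$ unless one already has comparability (Merzon's Lemma~\ref{Le2.1}). Second, and more importantly, the paper's substitute for your unspecified gluing step is concrete and not a rigidity argument at all. The key technical input is Lemma~\ref{Le:June1}: whenever $\sigma/N\le\tau/N$, the products $\{UV:U\in\mathcal{B}_\tau,\ V\in\mathcal{B}_\sigma\}$ already form a neighbourhood basis for $\sigma\wedge\tau$ --- a non-obvious extension of Fact~\ref{Fact}(ii) beyond the case of open centre. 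This is combined with Corollary~\ref{Coro2.3}, which splits any cover $\tau_1\prec\tau_2$ into the two cases ``jump visible on $N$'' versus ``jump visible on $G/N$'', and with an auxiliary topology $\sigma^*$ (local basis $\{UN:U\in\mathcal{B}_\sigma\}$) that forces the quotient-comparability hypothesis of Lemma~\ref{Le:June1} to hold. Your proposal correctly locates where the difficulty sits but does not supply the idea that resolves it.
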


The Jordan-Hölder Chain Condition (Theorem \ref{JH}) guarantees conclusion (ii) as a consequence of semi-modularity in (i). Our second principal contribution provides an alternative proof methodology for this result.

For a positive integer $k$, a Hausdorff group topology $\tau$ on $G$ is called \emph{$k$-maximal} if every non-refinable chain from $\tau$ to the discrete topology $\tau_d$ has length exactly $k$. The special case of $1$-maximal topologies corresponds to coatoms in $\mathcal{L}_G$, traditionally called \emph{maximal group topologies}, which play pivotal roles in both minimal group theory and gap analysis within $\mathcal{L}_G$. The collection $\mathcal{A}_k(G)$ of all $k$-maximal topologies trivially forms an antichain, with $\mathcal{A}_0(G) := \{\tau_d\}$ by convention.

A group $G$ is said to satisfy \emph{property $P_k$} if every infinite non-refinable chain of Hausdorff topologies in $\mathcal{L}_G$ intersects $\mathcal{A}_k(G)$. While $P_1$ holds universally by Zorn's lemma, Arnautov's Kourovka Notebook question \cite[14.5(d)]{KN} challenges whether infinite abelian groups satisfy $P_k$ for all $k\in\mathbb{N}$. We demonstrate the failure of this conjecture at the first nontrivial level:

\begin{theorem}\label{Th1}
No infinite abelian group satisfies property $P_2$. Consequently, $P_k$ fails for all $k\geq2$.
\end{theorem}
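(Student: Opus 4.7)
My plan is to prove the stronger assertion that for every infinite abelian $G$ there exists an infinite non-refinable chain of Hausdorff topologies in $\mathcal{L}_G$ missing $\bigcup_{k\ge 1}\mathcal{A}_k(G)$ entirely; both sentences of the theorem follow. Since $\mathcal{L}_G$ is modular in the abelian case, a standard strengthening of the Jordan--H\"older chain condition (Theorem \ref{Th00}(ii)) valid in modular lattices gives that all maximal chains in any interval $[\tau,\tau_d]\subseteq\mathcal{L}_G$ share a common length $n(\tau)\in\mathbb{N}\cup\{\infty\}$, with $\tau\in\mathcal{A}_k(G)$ iff $n(\tau)=k$. Hence $\tau\notin\bigcup_k\mathcal{A}_k(G)$ iff $n(\tau)=\infty$. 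The length function is additive on nested intervals, so $n(\sigma)=\infty$ and $\tau\le\sigma$ together imply $n(\tau)=\infty$. It therefore suffices to find a Hausdorff $\tau^{\ast}\in\mathcal{L}_G$ with $n(\tau^{\ast})=\infty$ together with an infinite descending non-refinable chain $\tau^{\ast}=\tau_0>\tau_1>\tau_2>\cdots$ of Hausdorff topologies, since then each $\tau_i\le\tau^{\ast}$ automatically satisfies $n(\tau_i)=\infty$.

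For $\tau^{\ast}$ I would use a precompact Hausdorff topology tailored to $G$: when $G$ has an element of infinite order, the pullback of the profinite topology on a $\mathbb{Z}$-subgroup, extended to $G$; when $G$ is torsion, a Hausdorff precompact topology built from a chain of finite-index subgroups with trivial intersection (for divisible torsion groups like the Pr\"ufer group, via the Pontryagin dual, where the descending chain of subgroups of $\widehat{G}$ provides what $G$ itself lacks). The verification that $n(\tau^{\ast})=\infty$ amounts to producing an explicit infinite non-refinable ascending chain from $\tau^{\ast}$ toward $\tau_d$, via intermediate Hausdorff topologies obtained e.g.\ from embeddings into $\mathbb{T}$ or into the profinite completion. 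For the descending chain, I would fix a strictly decreasing sequence $G=K_0\supsetneq K_1\supsetneq K_2\supsetneq\cdots$ with $\bigcap K_i=\{0\}$ and $|K_i/K_{i+1}|$ prime for each $i$, and define $\tau_i\in\mathcal{L}_G$ to have a neighborhood base of $0$ built from $\{K_j\}_{j\ge i}$, arranged so that $\tau_i\le\tau^{\ast}$ and $\tau_{i+1}\prec\tau_i$ is a cover in $\mathcal{L}_G$; length additivity then gives $n(\tau_i)=\infty$ for every $i$.

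The main obstacle is the cover verification $\tau_{i+1}\prec\tau_i$: ensuring that no Hausdorff group topology interpolates strictly. The cover relation in $\mathcal{L}_G$ is delicate---one must simultaneously rule out intermediate topologies whose neighborhood filters are generated by interpolating subgroups (where the simplicity of the cyclic quotient $K_i/K_{i+1}$ provides leverage) and candidate interpolations with non-subgroup neighborhoods of $0$ (e.g.\ those pulled back from $\mathbb{R}$-embeddings). My approach to the latter is to demand that the tail $\{K_j\}_{j>i}$ remain open in any putative interpolation, forcing the interpolating neighborhood filter at $0$ to contain a cofinal family of $K_j$-type subgroups, after which the prime-order quotient analysis concludes. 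A secondary difficulty is making the $\tau^{\ast}$ construction uniform across torsion, torsion-free, and mixed abelian groups; the Pontryagin-dual fallback addresses the divisible torsion case where strictly descending subgroup chains of $G$ itself are unavailable.
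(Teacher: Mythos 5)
Your plan rests on a misreading of what a witness to the failure of $P_2$ must look like, and this makes the whole approach unworkable. The chains quantified over in property $P_k$ are (as in the definition of $k$-maximality and as used explicitly in the paper's reduction lemma) non-refinable chains of Hausdorff topologies \emph{terminating at the discrete topology} $\tau_d$. For such chains the paper's remark that $P_1$ holds universally is exactly the Zorn-type observation that any qualifying chain must contain a coatom of $\mathcal{L}_G$, i.e.\ an element of $\mathcal{A}_1(G)$; consequently your ``stronger assertion''---a qualifying chain missing $\bigcup_{k\ge 1}\mathcal{A}_k(G)$, in particular missing $\mathcal{A}_1(G)$---is impossible. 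The chain you actually construct, a descending sequence $\tau^{\ast}=\tau_0>\tau_1>\cdots$ capped by a precompact Hausdorff topology $\tau^{\ast}$, does not terminate at $\tau_d$ and is therefore simply not a witness to $\neg P_2$, no matter what lengths $n(\tau_i)$ its members have. (If instead one drops the termination requirement and reads the definition literally, then already $P_1$ would fail trivially and Arnautov's question would be empty, which again shows this is not the intended notion.) The genuine content of the theorem is different: one must produce a qualifying chain that passes through some $1$-maximal topology $\tau$ but never through a $2$-maximal one, i.e.\ one must show that below $\tau$ there is a chain of Hausdorff topologies with no upper bound strictly under $\tau$ (failure of the paper's Property Q). Your proposal never engages with maximal topologies at all, so this crux is entirely missing. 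The paper handles it by playing two cardinal-function facts against each other: a maximal group topology on an abelian group refines the Bohr topology and hence (Leptin) has no nontrivial convergent sequences, so it cannot be first countable; while Property Q would force $\chi(G,\tau)\le|G|$ via Arhangel'ski\u{\i}'s condensation theorem and a transfinite join construction. This settles countable $G$, and uncountable groups are reduced to a countable subgroup declared open.

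There is also an internal defect in your construction, independent of the conceptual issue. If $K_0\supsetneq K_1\supsetneq\cdots$ is a fixed descending chain of subgroups with trivial intersection and you give $\tau_i$ the neighbourhood base $\{K_j\}_{j\ge i}$ at $0$, then all these bases generate the same filter (each tail is cofinal in the full sequence), so $\tau_i=\tau_{i+1}$ for every $i$: you get a single linear Hausdorff topology, not a strictly descending chain, let alone one consisting of covers. Producing an infinite descending chain of Hausdorff group topologies in which each step is a gap of $\mathcal{L}_G$ is a delicate matter (this is the subject of \cite{HPTX1,HPTX2}), and your sketch for ruling out interpolating topologies with non-subgroup neighbourhoods does not address it. Finally, note that even repairing the chain and appending a maximal chain from $\tau^{\ast}$ up to $\tau_d$ would not help: by the $P_1$ argument the appended part contains a coatom, and to avoid $\mathcal{A}_2(G)$ you would still need precisely the statement that the chain has no maximal element strictly below that coatom---that is, the failure of Property Q at a $1$-maximal topology, which is what the paper's Lemmas on first countability and character bounds are for.
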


\subsection*{Notations, Terminologies, and Elementary Facts}

Group topologies on a group $G$  are not assumed to be Hausdorff in general; consequently, $\mathcal{L}_G$ forms a complete lattice. The identity element of a group is denoted by $e$ or $1$. 

For any group $G$, the following result is known as {\em Pontryagin's Principle}, which characterizes when a system of subsets of $G$ forms a neighbourhood basis at $e$ for some group topology.

\begin{theorem}\label{Pont}\cite[Theorem 3.12]{AT}
Let $G$ be a group and $\mathcal{B}$ a family of subsets of $G$, each containing the identity $e$. Then $\mathcal{B}$ is a neighbourhood basis at $e$ for some group topology if and only if the following conditions hold:
\begin{itemize}
  \item[(i)] For each $U\in \mathcal{B}$, there exists $V\in \mathcal{B}$ such that $VV\subseteq U$;
  \item[(ii)] For each $U\in \mathcal{B}$, there exists $V\in \mathcal{B}$ such that $V^{-1}\subseteq U$;
  \item[(iii)] For each $U\in \mathcal{B}$ and $x\in U$, there exists $V\in \mathcal{B}$ such that $xV\subseteq U$;
  \item[(iv)] For each $U\in \mathcal{B}$ and $x\in G$, there exists $V\in \mathcal{B}$ such that $xVx^{-1}\subseteq U$;
  \item[(v)] For each pair $U, V\in\mathcal{B}$, there exists $W\in \mathcal{B}$ such that $W\subseteq U\cap V$.
\end{itemize}
Moreover, this topology is Hausdorff if and only if
\begin{itemize}
  \item[(vi)] $\bigcap \mathcal{B}=\{e\}$
\end{itemize}
is also satisfied.
\end{theorem}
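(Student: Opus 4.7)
The plan is to prove the two implications of Pontryagin's Principle separately. For necessity, if $\mathcal{B}$ is a neighbourhood basis at $e$ for a group topology $\tau$, then (i) and (ii) follow from the continuity of multiplication and inversion at $(e,e)$ and $e$ respectively, (iv) from the continuity of the conjugation automorphism $g \mapsto xgx^{-1}$ at $e$, (iii) from the fact that any point of an open set admits a basic neighbourhood contained in it (combined with the observation that in a topological group the left translate of a neighbourhood of $e$ by an element $x \in U$ serves as a basic neighbourhood of $x$), and (v) because $\mathcal{B}$ is a filter base. All of these are short routine verifications, so I would dispatch them quickly.

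For sufficiency, I would declare
\[
\tau := \{U \subseteq G : \forall x \in U,\ \exists V \in \mathcal{B} \text{ with } xV \subseteq U\}.
\]
The verification that $\tau$ is a topology is straightforward: $\emptyset, G \in \tau$, $\tau$ is closed under arbitrary unions by definition, and closure under finite intersections uses (v) to produce a single $W \in \mathcal{B}$ inside $V_1 \cap V_2$ for the two witnesses coming from two open sets. Condition (iii) is exactly what makes every $V \in \mathcal{B}$ itself belong to $\tau$, so the sets in $\mathcal{B}$ are open neighbourhoods of $e$, and by definition of $\tau$ any open neighbourhood of $e$ contains some element of $\mathcal{B}$; hence $\mathcal{B}$ is a neighbourhood basis at $e$. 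In the same way, for every $x \in G$ the family $\{xV : V \in \mathcal{B}\}$ is a neighbourhood basis at $x$ (each $xV$ contains the open set $x V^{\circ}$ and shrinking $V$ shrinks $xV$).

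The main obstacle, and the heart of the proof, is to verify that multiplication and inversion are continuous in $\tau$, since it is here that the axioms are used in combination. For continuity of multiplication at $(x,y)$, given a basic neighbourhood $xyV$ of $xy$, I would first apply (i) to choose $V' \in \mathcal{B}$ with $V'V' \subseteq V$, then apply (iv) with the element $y^{-1}$ to produce $U_1 \in \mathcal{B}$ with $y^{-1} U_1 y \subseteq V'$, and finally set $U_2 := V'$ (shrunk via (v) as needed); the identity $(xU_1)(yU_2) = xy\,(y^{-1}U_1 y)\,U_2 \subseteq xy V' V' \subseteq xyV$ then closes the argument. For continuity of inversion at $x$, a basic neighbourhood $x^{-1}V$ of $x^{-1}$ forces the requirement $xU^{-1}x^{-1} \subseteq V$; I would apply (iv) to get $V_1 \in \mathcal{B}$ with $xV_1 x^{-1} \subseteq V$ and then (ii) to get $U \in \mathcal{B}$ with $U^{-1} \subseteq V_1$, yielding $xU^{-1}x^{-1} \subseteq xV_1 x^{-1} \subseteq V$ as desired.

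Finally, for the Hausdorff addendum, the implication $\tau$ Hausdorff $\Rightarrow$ (vi) is immediate because $\bigcap \mathcal{B}$ is contained in the intersection of all neighbourhoods of $e$, which equals $\overline{\{e\}} = \{e\}$ in any $T_1$ group. Conversely, if (vi) holds and $x \neq e$, then some $V \in \mathcal{B}$ excludes $x$; choosing $V' \in \mathcal{B}$ with $V'V'^{-1} \subseteq V$ by combining (i) and (ii), the neighbourhoods $V'$ of $e$ and $xV'$ of $x$ are disjoint, so $\tau$ is Hausdorff. I expect the combinatorial choices in the multiplication-continuity step to be the subtlest point, since there the non-commutativity of $G$ forces the use of (iv) to move $U_1$ across $y$.
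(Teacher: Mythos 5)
Your proof is correct and is the standard argument: the paper itself gives no proof of this statement, citing it as \cite[Theorem 3.12]{AT} (Pontryagin's Principle), and your construction of $\tau$ as the family of sets $U$ with $xV\subseteq U$ for some $V\in\mathcal{B}$ at each $x\in U$, together with the use of (iv) to move $U_1$ across $y$ in the multiplication step, is exactly the classical proof found in that reference. No gaps of substance; only the Hausdorff direction tacitly uses homogeneity to reduce separation of arbitrary pairs to separating $e$ from $x\neq e$, which is routine.
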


If $\tau$ is a group topology on a group $G$ and $N$ is a subgroup of $G$, then $\tau\res_N$ denotes the subspace topology on $N$ induced from $(G, \tau)$.  
The quotient topology on the left coset space $G/N$ is denoted by $\tau/N$.  
It is evident that if $N$ is a normal subgroup, then $\tau/N$ is a group topology.  
The following lemma, known as Merzon's lemma, is widely used in the study of minimal groups.

\begin{lemma}\label{Le2.1}\cite{DS,Mer}
Let $G$ be a group and $N$ a (not necessarily normal) subgroup of $G$.  
Then for $\tau, \sigma\in \mathcal{L}_G$, if
\begin{itemize}
  \item[(i)] $\sigma\subseteq \tau$;
  \item[(ii)] $\sigma\res_N=\tau\res_N$; and
  \item[(iii)] $\sigma/N=\tau/N$,
\end{itemize}
then $\sigma=\tau$.
\end{lemma}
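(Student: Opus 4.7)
Since $\sigma \subseteq \tau$, establishing $\sigma = \tau$ amounts to proving the reverse inclusion $\tau \subseteq \sigma$, which (by left-translation homogeneity of group topologies) reduces to showing that every $\tau$-open neighborhood $U$ of $e$ contains a $\sigma$-open neighborhood of $e$. I would argue by contradiction via nets. Assume no $\sigma$-open neighborhood of $e$ is contained in $U$; then $e$ lies in the $\sigma$-closure of $G \setminus U$, producing a net $(g_\alpha)_{\alpha \in A} \subseteq G \setminus U$ with $g_\alpha \to_\sigma e$. The aim is to show that along a suitable subnet one has $g_\beta \to_\tau e$; since $G \setminus U$ is $\tau$-closed this forces $e \notin U$, a contradiction.

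The main engine is hypothesis (iii). For any $\tau$-open neighborhood $V$ of $e$, the saturation $VN = \pi^{-1}(\pi(V))$ is $\tau$-open and $N$-saturated, hence open in the quotient $\tau/N$; by (iii) it is also open in $\sigma/N$, so $VN$ is $\sigma$-open in $G$. Consequently $VN$ is a $\sigma$-neighborhood of $e$, and $g_\alpha \in VN$ eventually, which says $g_\alpha N \cap V \neq \emptyset$. A diagonal subnet construction indexed by the directed set $D = \{(\alpha, V) \in A \times \mathcal{N}_\tau(e) : g_\alpha \in VN\}$, ordered coordinatewise by $\alpha$ increasing and $V$ decreasing (reverse inclusion), together with a choice $v_{(\alpha,V)} \in g_\alpha N \cap V$, yields a subnet $(g_\beta)_{\beta \in D}$ of $(g_\alpha)$ and a selection $(v_\beta) \subseteq G$ with $v_\beta \to_\tau e$ by design (as the $V$-component shrinks to $\{e\}$).

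Setting $n_\beta := v_\beta^{-1} g_\beta \in N$, continuity of the group operations in $\sigma$---combined with $v_\beta \to_\sigma e$ (from $v_\beta \to_\tau e$ and $\sigma \subseteq \tau$) and $g_\beta \to_\sigma e$---gives $n_\beta \to_\sigma e$; since $n_\beta \in N$, this is $\sigma\res_N$-convergence, which by hypothesis (ii) coincides with $\tau\res_N$-convergence, so $n_\beta \to_\tau e$ in $G$. Multiplying, $g_\beta = v_\beta n_\beta \to_\tau e$ by continuity in $\tau$, which gives the desired contradiction with $g_\beta \in G \setminus U$. The one delicate technical point is the diagonal subnet construction: one must verify that $D$ is cofinal in $A$ (true because, given $\alpha_0 \in A$, any fixed $V$ and any $\alpha \geq \alpha_0$ with $g_\alpha \in VN$ yield an element of $D$ above $\alpha_0$) and invoke the axiom of choice to produce the simultaneous selection $v_\beta$, after which the $\tau$-convergence $v_\beta \to_\tau e$ is automatic from the order on the second coordinate.
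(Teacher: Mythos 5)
Your argument is correct. Note that the paper itself does not prove Lemma \ref{Le2.1}: it is quoted as Merzon's lemma from \cite{DS,Mer}, so the relevant comparison is with the classical direct proof. Your net-theoretic argument isolates exactly the same three ingredients as that proof: (a) by (iii), the saturation $VN=\pi^{-1}(\pi(V))$ of a $\tau$-open $V\ni e$ is $\sigma$-open; (b) points of $VN$ factor as $vn$ with $v\in V$, $n\in N$; (c) by (ii), $\sigma$-smallness of the $N$-component upgrades to $\tau$-smallness. The difference is that you route these through a diagonal subnet construction (requiring a choice function, the Kelley subnet condition, and also the directedness of $D$, which you should make explicit but which follows from the same eventual-membership observation), whereas the standard argument is a short neighbourhood computation with no nets at all: given a $\tau$-neighbourhood $U$ of $e$, pick a $\tau$-open $V\ni e$ with $VV\subseteq U$; by (ii) and continuity of $(x,y)\mapsto x^{-1}y$ in $\sigma$ choose a $\sigma$-open $W\ni e$ with $W^{-1}W\cap N\subseteq V$; then $O:=\bigl((V\cap W)N\bigr)\cap W$ is a $\sigma$-neighbourhood of $e$ (by your observation (a), since $V\cap W$ is $\tau$-open) and $O\subseteq U$, because $x=vn\in O$ with $v\in V\cap W$, $n\in N$ gives $n=v^{-1}x\in W^{-1}W\cap N\subseteq V$, hence $x\in VV\subseteq U$. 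So your proof is valid (and works without any Hausdorff assumption, as needed here), but the net and choice machinery can be stripped away; the filter version is the more economical form of the same idea.
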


In a lattice $\mathcal{L}$, we say that {\em $a$ covers $b$}, or that {\em $a$ is a cover of $b$}, if $b < a$ and there is no $c\in \mathcal{L}$ with $b < c < a$.  
We also refer to the pair $\{b, a\}$ as a {\em covering (in $\mathcal{L}$)}.  
In this case, we write $b\prec a$.  
The symbol $b\preceq a$ means that either $b\prec a$ or $b=a$.  
Clearly, a covering is an non-refinable chain of length 1.  
Coverings in the lattice $\mathcal{L}_G$ are also called
{\em gaps}
\cite{HPTX1, HPTX2}.  
The supremum and infimum of two elements $a$ and $b$ in a lattice $\mathcal{L}$ are denoted by $a\vee b$ and $a\wedge b$, respectively.  
For a subset $A$ of $\mathcal{L}$, the supremum (resp. infimum) of $A$ (if it exists) is denoted by $\bigvee_{a\in A}a$ or $\bigvee A$ (resp. $\bigwedge_{a\in A}a$ or $\bigwedge A$).

The following fact is well known (see \cite{DPS}).

\begin{fact}\label{Fact}
Let $\sigma, \tau$ be two group topologies on a group $G$, and let $\mathcal{B}_\sigma$ (resp., $\mathcal{B}_\tau$) be a neighbourhood basis at the identity in $(G, \sigma)$ (resp., $(G, \tau)$). Then:
\begin{itemize}
  \item[(i)] $\{U\cap V: U\in \mathcal{B}_\sigma,\, V\in\mathcal{B}_\tau\}$ forms a local basis at $e$ in $(G, \sigma\vee\tau)$;
  \item[(ii)] If the centre of $G$ is open with respect to either $\sigma$ or $\tau$, then $\{UV: U\in \mathcal{B}_\sigma,\, V\in\mathcal{B}_\tau\}$ forms a local basis at $e$ in $(G, \sigma\wedge\tau)$.
\end{itemize}
\end{fact}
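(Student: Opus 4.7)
The plan is to verify Pontryagin's Principle (Theorem \ref{Pont}) for each proposed family at $e$, and then identify the resulting group topology with the appropriate extremum in $\mathcal{L}_G$. For part (i), I would check conditions (i)--(v) of Theorem \ref{Pont} for $\mathcal{B}:=\{U\cap V:U\in\mathcal{B}_\sigma,\,V\in\mathcal{B}_\tau\}$ by intersecting a witness supplied by the same condition on $\mathcal{B}_\sigma$ with one supplied by $\mathcal{B}_\tau$; each verification is routine. The resulting group topology $\rho$ is finer than both $\sigma$ and $\tau$, since $U\cap V\subseteq U$ and $U\cap V\subseteq V$ place $U$ and $V$ in the $\rho$-neighbourhood filter of $e$. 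Minimality among upper bounds is immediate: any $\rho'\in\mathcal{L}_G$ containing $\sigma$ and $\tau$ has each $U$ and each $V$ $\rho'$-open, hence each $U\cap V$ $\rho'$-open, so $\rho\subseteq\rho'$, and thus $\rho=\sigma\vee\tau$.

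For part (ii), assume without loss of generality that $Z(G)$ is $\sigma$-open; replacing each $U\in\mathcal{B}_\sigma$ by $U\cap Z(G)$, I may further assume $U\subseteq Z(G)$ for every $U\in\mathcal{B}_\sigma$. This centrality is the engine of the argument: since elements of $U$ commute with all of $G$, one has $(U_1V_1)(U_2V_2)=(U_1U_2)(V_1V_2)$, $(UV)^{-1}=U^{-1}V^{-1}$, and $g(UV)g^{-1}=U\cdot gVg^{-1}$ for every $g\in G$. Using these identities, each Pontryagin condition for $\mathcal{B}':=\{UV:U\in\mathcal{B}_\sigma,\,V\in\mathcal{B}_\tau\}$ follows by choosing shrinkings independently in $\mathcal{B}_\sigma$ and $\mathcal{B}_\tau$. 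The resulting group topology $\rho$ is coarser than both $\sigma$ and $\tau$, because $U\subseteq UV\supseteq V$ implies that every basic $\rho$-neighbourhood of $e$ is both a $\sigma$- and a $\tau$-neighbourhood. To see $\rho$ is the finest common lower bound, take any $\rho'\in\mathcal{L}_G$ with $\rho'\subseteq\sigma\cap\tau$ and a $\rho'$-neighbourhood $W$ of $e$: applying Pontryagin (i) to $\rho'$, pick a $\rho'$-neighbourhood $W_1$ with $W_1W_1\subseteq W$; then select $U\in\mathcal{B}_\sigma$ and $V\in\mathcal{B}_\tau$ both contained in $W_1$ (using $\rho'\subseteq\sigma$ and $\rho'\subseteq\tau$), so that $UV\subseteq W_1W_1\subseteq W$, proving $W$ is a $\rho$-neighbourhood of $e$ and hence $\rho'\subseteq\rho$.

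The main technical obstacle is the verification of Pontryagin's conditions for $\mathcal{B}'$ in (ii)---particularly closure under multiplication and conjugation invariance---since without centrality the three identities above break and the family $\{UV\}$ need not be closed under products at all. The centre-openness hypothesis is precisely what decouples these computations into independent conditions on $\mathcal{B}_\sigma$ and $\mathcal{B}_\tau$; the symmetric case where $Z(G)$ is $\tau$-open is handled by interchanging the roles of $\sigma$ and $\tau$.
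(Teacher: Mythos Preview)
The paper does not actually prove Fact~\ref{Fact}; it simply records it as ``well known'' with a reference to \cite{DPS}. Your argument via Pontryagin's Principle is correct and is precisely the method the paper itself deploys when proving the generalization in Lemma~\ref{Le:June1} (where the hypothesis ``$Z(G)$ open in $\sigma$ or $\tau$'' is weakened to ``$\sigma/N\leq\tau/N$ for some central $N$''). One small point worth making explicit: when you replace each $U\in\mathcal{B}_\sigma$ by $U\cap Z(G)$, you are changing the basis, so strictly speaking you have shown that $\{(U\cap Z(G))V\}$ is a basis for $\sigma\wedge\tau$; a one-line observation that this family and the original $\{UV\}$ generate the same filter (each $UV$ contains $(U\cap Z(G))V$, and conversely each $(U\cap Z(G))V$ contains $U'V$ for some $U'\in\mathcal{B}_\sigma$ with $U'\subseteq U\cap Z(G)$) closes the loop.
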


\begin{proposition}\label{Prop:Oct11}
Let $G$ be a group and $N$ a subgroup. Then for every pair $\sigma, \tau\in \mathcal{L}_G$, one has
 \begin{itemize}
   \item[(i)] $(\sigma\vee\tau)\res_N = \sigma\res_N \vee \tau\res_N$;
   \item[(ii)] $(\sigma\wedge\tau)/N = \sigma/N \wedge \tau/N$, provided that $N$ is normal.
 \end{itemize}
\end{proposition}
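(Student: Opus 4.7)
The plan is to prove each equality by reducing to a comparison of neighbourhood bases at the identity (for (i)) or via a pull-back construction (for (ii)). In both cases the inequality $\leq$ is essentially formal, and the substance lies in the reverse inequality.

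For part (i), I would fix neighbourhood bases $\mathcal{B}_\sigma$ and $\mathcal{B}_\tau$ at $e$ in $(G,\sigma)$ and $(G,\tau)$ respectively. By Fact \ref{Fact}(i) applied inside $G$, a basis at $e$ for $\sigma \vee \tau$ is $\{U \cap V : U \in \mathcal{B}_\sigma,\, V \in \mathcal{B}_\tau\}$, so restricting to $N$ gives a basis at $e$ for $(\sigma \vee \tau)\res_N$ of the form $\{(U \cap V) \cap N\}$. On the other hand, $\{U \cap N : U \in \mathcal{B}_\sigma\}$ and $\{V \cap N : V \in \mathcal{B}_\tau\}$ are bases at $e$ for $\sigma\res_N$ and $\tau\res_N$ in $\mathcal{L}_N$, so Fact \ref{Fact}(i) applied inside $N$ yields a basis at $e$ for $\sigma\res_N \vee \tau\res_N$ of the form $\{(U \cap N) \cap (V \cap N)\}$. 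The elementary identity $(U \cap V) \cap N = (U \cap N) \cap (V \cap N)$ makes the two bases agree, so the two group topologies on $N$ coincide.

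For part (ii), I would first note that $q : \mathcal{L}_G \to \mathcal{L}_{G/N}$, $\rho \mapsto \rho/N$, is order-preserving (a set open in $\rho_1/N$ has preimage in $\rho_1 \subseteq \rho_2$, hence is open in $\rho_2/N$), which gives the easy inequality $(\sigma \wedge \tau)/N \leq \sigma/N \wedge \tau/N$. For the reverse direction, set $\mu = \sigma/N \wedge \tau/N$ and let $\nu$ be the topology on $G$ whose open sets are precisely the preimages $\pi^{-1}(W)$ with $W$ open in $\mu$, where $\pi : G \to G/N$ is the canonical projection. Since $N$ is normal, $\pi$ is a surjective group homomorphism, so the pull-back $\nu$ is a group topology on $G$, and surjectivity of $\pi$ forces $\nu/N = \mu$. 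From $\mu \leq \sigma/N$ and the definition of the quotient topology one reads off $\nu \leq \sigma$; symmetrically $\nu \leq \tau$, so $\nu \leq \sigma \wedge \tau$. Applying $q$ to this last inequality gives $\mu = \nu/N \leq (\sigma \wedge \tau)/N$, finishing the proof.

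The main subtlety worth flagging is that the meet $\sigma \wedge \tau$ in $\mathcal{L}_G$ is in general strictly coarser than the set-theoretic intersection $\sigma \cap \tau$, which need not even be a group topology; this is precisely why $\mathcal{L}_G$ is only a $\vee$-subsemilattice of $\mathcal{T}_G$. Consequently, one cannot mimic in (ii) the direct basis-level manipulation that worked in (i), and the pull-back $\nu$ is introduced precisely to produce a concrete witness in $\mathcal{L}_G$ sitting below both $\sigma$ and $\tau$. Everything else reduces to order-preservation of restriction and quotient, together with the universal property of the quotient topology.
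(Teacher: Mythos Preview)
Your proof is correct and follows essentially the same route as the paper: part (i) is deduced from Fact~\ref{Fact}(i) by comparing local bases, and part (ii) uses the pull-back topology $\nu$ (the paper's $\lambda$) of $\sigma/N \wedge \tau/N$ along $\pi$ to witness the nontrivial inequality. Your write-up simply unpacks the details the paper leaves implicit.
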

\begin{proof}
Item (i) is an immediate consequence of Fact \ref{Fact} (i).

For (ii), note first that $(\sigma\wedge\tau)/N \leq \sigma/N \wedge \tau/N$.  
Conversely, let $\lambda$ be the group topology on $G$ defined such that a subset is open if and only if it is the preimage of some open set under the canonical map 
$$
G\to (G/N, \sigma/N \wedge \tau/N).
$$
Then $\lambda\leq \sigma\wedge\tau$, which implies $(\sigma\wedge\tau)/N\geq \lambda/N = \sigma/N \wedge \tau/N$.
\end{proof}

A lattice $\mathcal{L}$ is called {\em modular} if it satisfies the following condition: for every triple $a, b, c\in \mathcal{L}$ with $a\leq c$, one has 
$$
a\vee(b\wedge c) = (a\vee b)\wedge c.
$$
Distributive lattices and the lattice of normal subgroups of a given group $G$ are classic examples of modular lattices.  
Modular lattices possess several important properties; among them is the so-called {\em upper cover condition}: if $b\prec a$, then for every $c$ one has $b\vee c\preceq a\vee c$.

A lattice $\mathcal{L}$ is called {\em semi-modular} or {\em upper semi-modular} if it satisfies the upper covering condition.  
A key property of semi-modular lattices is that they satisfy the {\em Jordan-H\"{o}lder Chain Condition} (also known as the {\em Jordan-Dedekind Chain Condition}).

\begin{definition}\cite[$\S 1.9$]{St}
A lattice $\mathcal{L}$ is said to have the Jordan-H\"{o}lder Chain Condition if for every pair of elements $a, b\in \mathcal{L}$ with $a<b$, either (1) all maximal chains in $[a, b]$ are infinite, or (2) all maximal chains in $[a,b]$ are finite and have the same length.
\end{definition}

\begin{theorem}\label{JH}\cite[Theorem 1.9.1]{St}
Every semi-modular lattice satisfies the Jordan-H\"{o}lder Chain Condition.
\end{theorem}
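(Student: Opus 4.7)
The plan is to prove the stronger claim that in any semi-modular lattice $\mathcal{L}$, if the interval $[a,b]$ admits a maximal chain of finite length $n$, then every maximal chain in $[a,b]$ has length exactly $n$. The Jordan--H\"{o}lder Chain Condition is then immediate: if some maximal chain in $[a,b]$ is finite of length $n$, then all others are finite of length $n$; otherwise no maximal chain can be finite, so all are infinite.

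I would proceed by induction on $n$. The bases $n=0$ (where $a=b$) and $n=1$ (where $a \prec b$) are trivial. Assume the claim for all $m<n$, fix a maximal chain $C\colon a=c_0\prec c_1\prec\cdots\prec c_n=b$, and let $D\colon a=d_0\prec d_1\prec d_2\prec\cdots$ be an arbitrary maximal chain of $[a,b]$. Maximality of $D$ forces $a\prec d_1$. If $d_1=c_1$, then $D\setminus\{a\}$ and $c_1\prec c_2\prec\cdots\prec c_n$ are both maximal chains of $[c_1,b]$; applying the inductive hypothesis to the latter (of length $n-1<n$) forces $D\setminus\{a\}$ to have length $n-1$, whence $D$ has length $n$.

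The substantive case is $d_1\neq c_1$. Set $e:=c_1\vee d_1$. The upper cover condition (semi-modularity) applied to $a\prec c_1$ yields $d_1 = a\vee d_1 \preceq c_1\vee d_1 = e$, and symmetrically $c_1\preceq e$. Since $a\prec c_1$ and $a\prec d_1$ with $c_1\neq d_1$, neither $c_1=e$ nor $d_1=e$ is possible (each would collapse $c_1=d_1$); hence $c_1\prec e$ and $d_1\prec e$. Choose (via Zorn) a maximal chain $e=f_0\prec f_1\prec\cdots\prec f_k=b$ of $[e,b]$. Prepending $c_1\prec e$ produces a maximal chain of $[c_1,b]$ of length $k+1$; since $c_1\prec\cdots\prec c_n$ is a maximal chain of $[c_1,b]$ of length $n-1<n$, the inductive hypothesis applied in $[c_1,b]$ gives $k+1=n-1$, i.e.\ $k=n-2$. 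Prepending instead $d_1\prec e$ yields a maximal chain of $[d_1,b]$ of length $n-1$, and the inductive hypothesis applied once more in $[d_1,b]$ forces the tail $d_1\prec d_2\prec\cdots$ of $D$ to have length $n-1$. Therefore $D$ has length $n$.

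The main obstacle lies precisely in the subcase $d_1\neq c_1$: this is where semi-modularity is indispensable, for one needs the join $e=c_1\vee d_1$ to actually \emph{cover} both $c_1$ and $d_1$ rather than merely dominate them, so that $e$ can act as a common pivot linking $[c_1,b]$ and $[d_1,b]$. The inductive hypothesis must then be invoked twice, first to pin down the length of maximal chains in $[e,b]$ via $[c_1,b]$, and then to transfer this information back to $[d_1,b]$ and hence to $D$. Verifying that the prepended chains are genuinely maximal in their respective intervals is routine, relying only on the fact that $c_1\prec e$ and $d_1\prec e$ are coverings and on the chosen maximality of the chain in $[e,b]$.
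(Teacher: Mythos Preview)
The paper does not prove this theorem; it merely cites it as \cite[Theorem 1.9.1]{St} and uses it as a black box. Your argument is the standard induction proof (essentially the one found in Stern's book) and is correct. The only presentational quibble is that when you write ``choose (via Zorn) a maximal chain $e=f_0\prec f_1\prec\cdots\prec f_k=b$ of $[e,b]$'' you are tacitly writing the chain as finite before establishing that it must be; strictly speaking, Zorn hands you an arbitrary maximal chain $M$ of $[e,b]$, and only after prepending $c_1$ and invoking the inductive hypothesis in $[c_1,b]$ (which does admit the finite maximal chain $c_1\prec\cdots\prec c_n$ of length $n-1$) is $M$ forced to have length $n-2$. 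Once that is in hand, prepending $d_1$ exhibits a maximal chain of length $n-1$ in $[d_1,b]$, so the inductive hypothesis applies there too and finishes the argument exactly as you describe.
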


Undefined symbols or notions may be found in \cite{AT, DPS} for Topological Groups and in \cite{Gr} for Lattice Theory.

\section{(Semi-)Modularity of $\mathcal{L}_G$ with $G$ Nilpotent}

We now proceed to prove Theorem \ref{Th0}. First, we recall several foundational results. The following three lemmas are standard and their proofs follow directly from definitions, hence we omit them.

\begin{lemma}\label{Le00}
Let $G$ be an arbitrary group (finite or infinite). The collection of all normal subgroups of $G$, ordered by reverse inclusion, constitutes a modular lattice.
\end{lemma}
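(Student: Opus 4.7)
The plan is to reduce everything to the classical Dedekind modular identity for normal subgroups. First I would check the lattice structure on the set $\mathcal{N}(G)$ of normal subgroups of $G$ under ordinary inclusion: for normal subgroups $A$ and $B$ of $G$, the intersection $A\cap B$ and the product $AB=\{ab:a\in A,\,b\in B\}$ are both normal, so they supply the meet and join, respectively.

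Next I would verify the Dedekind identity
$$A(B\cap C)=AB\cap C$$
for any normal subgroups $A,B,C$ with $A\subseteq C$. The inclusion $A(B\cap C)\subseteq AB\cap C$ is immediate from $A\subseteq C$: a product $ab$ with $a\in A$ and $b\in B\cap C$ lies in $AB$ and, because both factors belong to $C$, also in $C$. For the reverse inclusion I would take an arbitrary $x\in AB\cap C$ and write $x=ab$ with $a\in A$ and $b\in B$; then $b=a^{-1}x\in C$, since both $a^{-1}\in A\subseteq C$ and $x\in C$, so $b\in B\cap C$ and therefore $x\in A(B\cap C)$. This is precisely the modularity condition in the inclusion-ordered lattice $\mathcal{N}(G)$.

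Finally I would pass from the inclusion order to the reverse-inclusion order demanded by the lemma. Since the defining identity of modularity is invariant under simultaneously swapping $\vee\leftrightarrow\wedge$ and reversing the order, modularity is a self-dual property. Hence the reverse-inclusion lattice -- in which the join becomes $A\cap B$ and the meet becomes $AB$ -- is modular as well.

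No serious obstacle is expected; the only point warranting attention is the bookkeeping of which operation plays the role of meet and which plays the role of join after the order reversal. The algebraic heart of the argument is the one-line verification of the Dedekind identity, and the rest is formal.
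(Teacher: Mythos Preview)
Your argument is correct and is precisely the standard verification the paper has in mind when it declares the lemma ``follows directly from definitions'' and omits the proof. There is nothing to compare: the paper supplies no argument, and your Dedekind-identity computation together with the self-duality of modularity is exactly the expected route.
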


\begin{lemma}\label{Le01}
Let $G$ be a finite group equipped with a group topology $\tau$. There exists a normal subgroup $H$ of $G$ such that:
\begin{itemize}
\item The singleton $\{H\}$ forms a neighborhood base at the identity for $(G, \tau)$;
\item A subset of $G$ is open if and only if it is a union of cosets of $H$.
\end{itemize}
\end{lemma}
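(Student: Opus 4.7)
My plan is to take $H$ to be the intersection of all open neighborhoods of $e$ in $(G,\tau)$, and then to verify via Pontryagin's Principle (Theorem \ref{Pont}) that $H$ is a normal subgroup whose singleton already forms a local base.

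The first observation is that since $G$ is finite there are only finitely many distinct open neighborhoods of $e$, so
\[
H := \bigcap\{U \sub G : U \text{ is open in } (G,\tau) \text{ and } e \in U\}
\]
is itself open and is contained in every open neighborhood of $e$. In particular $\{H\}$ is automatically a neighborhood base at $e$, which handles the first bullet once we know $H$ is a subgroup. To see $H$ is a subgroup, I would apply Pontryagin's Principle to $\mathcal{B} = \{H\}$: by (i) there exists $V$ open with $VV \sub H$, but minimality forces $H \sub V$, hence $HH \sub VV \sub H$; similarly (ii) yields $H^{-1} \sub H$, and therefore $H = H^{-1}$. Normality of $H$ follows directly from (iv): for each $x \in G$ there is $V$ open with $xVx^{-1} \sub H$, and since $H \sub V$ we conclude $xHx^{-1} \sub H$ for every $x$.

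For the second bullet, I would argue by translation. Since $\tau$ is a group topology, a subset $U \sub G$ is open if and only if for every $x \in U$ there is a neighborhood $W$ of $e$ with $xW \sub U$. Because $H$ is the minimum open neighborhood of $e$, this condition is equivalent to $xH \sub U$ for each $x \in U$, which is precisely the statement that $U$ is a union of left cosets of $H$. Conversely, every coset $xH$ is open (being the translate of an open set), so any union of cosets is open.

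There is no real obstacle here; the only thing to be careful about is that the argument relies in an essential way on finiteness to guarantee that the intersection defining $H$ is actually open (this is exactly where the lemma would fail for infinite groups, which is of course the reason the lemma is stated for finite $G$).
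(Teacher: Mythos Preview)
Your argument is correct and is exactly the standard verification the paper has in mind; indeed the paper omits the proof entirely, stating that the lemma ``follows directly from definitions.'' Your choice of $H$ as the (finite, hence open) intersection of all identity neighborhoods, followed by the use of the group-topology axioms (i), (ii), (iv) of Theorem~\ref{Pont} to extract the subgroup and normality properties, is precisely the intended route, and the coset description of open sets via translation is routine.
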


\begin{lemma}\label{Le02}
Let $\mathcal{P}_1$ and $\mathcal{P}_2$ be modular lattices. Their Cartesian product $\mathcal{P}_1 \times \mathcal{P}_2$, endowed with the componentwise ordering (where $(a,b) \leq (c,d)$ iff $a \leq c$ and $b \leq d$), remains modular.
\end{lemma}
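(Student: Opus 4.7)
The plan is to reduce the modular identity in the product lattice directly to the modular identities in each factor, exploiting the componentwise nature of $\vee$ and $\wedge$ under the coordinatewise order.

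First I would verify (routinely, from the definition of the coordinatewise order) that joins and meets in $\mathcal{P}_1 \times \mathcal{P}_2$ are computed coordinatewise: for any $(x_1, x_2), (y_1, y_2) \in \mathcal{P}_1 \times \mathcal{P}_2$,
\[
(x_1, x_2) \vee (y_1, y_2) = (x_1 \vee y_1,\, x_2 \vee y_2), \qquad (x_1, x_2) \wedge (y_1, y_2) = (x_1 \wedge y_1,\, x_2 \wedge y_2).
\]
Indeed, $(x_1 \vee y_1, x_2 \vee y_2)$ is an upper bound of both elements by construction, and any componentwise upper bound must dominate it in each coordinate; the argument for $\wedge$ is dual.

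Next, I would fix arbitrary $(a_1, a_2), (b_1, b_2), (c_1, c_2) \in \mathcal{P}_1 \times \mathcal{P}_2$ with $(a_1, a_2) \leq (c_1, c_2)$. The definition of the order immediately forces $a_i \leq c_i$ in $\mathcal{P}_i$ for $i = 1, 2$, so the hypothesis of the modular law holds in each factor. Applying that law coordinatewise gives
\[
(a_1, a_2) \vee \bigl((b_1, b_2) \wedge (c_1, c_2)\bigr) = \bigl(a_1 \vee (b_1 \wedge c_1),\, a_2 \vee (b_2 \wedge c_2)\bigr) = \bigl((a_1 \vee b_1) \wedge c_1,\, (a_2 \vee b_2) \wedge c_2\bigr),
\]
and the right-hand side is precisely $\bigl((a_1, a_2) \vee (b_1, b_2)\bigr) \wedge (c_1, c_2)$ by the coordinatewise formulas above.

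There is no genuine obstacle here; the argument is entirely formal and the modular identity is inherited componentwise. The only point worth highlighting is that the hypothesis $a \leq c$ in the product translates into the hypothesis $a_i \leq c_i$ in each factor, which is exactly what is needed to invoke modularity of the $\mathcal{P}_i$. As the paper already notes, this is a standard preliminary included only for completeness.
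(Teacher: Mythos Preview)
Your proof is correct and is exactly the direct verification from the definitions that the paper has in mind; indeed, the paper explicitly omits the proof of this lemma, remarking that it ``follows directly from definitions,'' and your argument is the standard componentwise verification one would supply.
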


The following lemma, while elementary, requires explicit verification:

\begin{lemma}\label{Le03}
Let $n \geq 2$ and $m \geq 2$ be coprime integers, and let $G$ be a group where every element has order dividing $n$. For any subgroup $H \leq G$ and element $x \in G$, if $x^m \in H$, then necessarily $x \in H$.
\end{lemma}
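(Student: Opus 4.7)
The plan is to invoke Bézout's identity together with the exponent hypothesis on $G$. Since $\gcd(m,n)=1$, there exist integers $a,b\in\mathbb{Z}$ such that $am+bn=1$. I would record this at the outset, and observe that the hypothesis on $G$ gives $x^n=e$ for every $x\in G$.

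Given $x\in G$ with $x^m\in H$, the one-line computation
$$
x = x^{am+bn} = (x^m)^a\,(x^n)^b = (x^m)^a\cdot e^{\,b} = (x^m)^a
$$
then finishes the argument: since $x^m\in H$ and $H$ is closed under powers (positive or negative) and inverses, $(x^m)^a\in H$, whence $x\in H$.

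The only point worth mentioning, rather than a genuine obstacle, is that the Bézout coefficient $a$ may be negative; this causes no trouble because $H$ being a subgroup is closed under inversion, so $(x^m)^a$ makes sense and lies in $H$ for any integer $a$. No use of commutativity, normality, or finiteness of $G$ is required.
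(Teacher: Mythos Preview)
Your proof is correct and follows essentially the same idea as the paper's: both show that $x$ lies in the cyclic subgroup $\langle x^m\rangle\subseteq H$. The paper phrases this by observing that $x$ and $x^m$ have the same order (hence $\langle x\rangle=\langle x^m\rangle$), whereas you exhibit the explicit exponent via B\'ezout's identity; the two arguments are interchangeable.
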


\begin{proof}
Since the order of $x$ divides $n$ and $m,n$ are coprime, the elements $x$ and $x^m$ share the same order. Consequently, $x$ belongs to the cyclic subgroup $\langle x^m \rangle$, which is contained in $H$.
\end{proof}

\begin{proof}[{\bf Proof of Theorem \ref{Th0}}]
Let $F$ be a finite non-abelian nilpotent group. Fix a prime number $p$ such that $n := |F|$ is coprime to $p$, and let $H = \bigoplus_\omega \mathbb{Z}(p)$, where $\mathbb{Z}(p)$ denotes the cyclic group of order $p$. We claim that $G := H \times F$ satisfies the required properties.

By Lemmas \ref{Le00}, \ref{Le01}, and \ref{Le02}, it suffices to show that every group topology $\tau$ on $G$ is a product topology. Specifically, there exist topologies $\tau_H \in \mathcal{L}_H$ and $\tau_F \in \mathcal{L}_F$ such that $\tau = \tau_H \times \tau_F$. Throughout the proof, we identify $H$ and $F$ with their canonical embedding images in $G$, and denote the identity element of $G$ by $e = (1,1)$.

Let $N$ denote the closure of $\{e\}$ in $(G, \tau)$. We first establish that $N = N_1 \times N_2$, where $N_1 = N \cap H$ and $N_2 = N \cap F$. Take $(x, y) \in N \subseteq H \times F$ with $x \in H$ and $y \in F$. Observe that:
\[
(x^n, 1) = (x, y)^n \in N \quad \text{and} \quad (1, y^p) = (x, y)^p \in N.
\]
Since $n$ and $p$ are coprime, Lemma \ref{Le03} implies $(x, 1) \in N_1$ and $(1, y) \in N_2$. Therefore, $N \subseteq N_1 \times N_2$, while the reverse inclusion is immediate. Hence $N = N_1 \times N_2$.

Consider the continuous homomorphism $\varphi: G \to G$ defined by $\varphi(g) = g^p$. From the previous decomposition, $\varphi(x,y) \in N$ if and only if $x^p \in N_1$ and $y^p \in N_2$. Noting that $x^p = 1$ for all $x \in H$ and $y^p \in N_2$ precisely when $y \in N_2$, we find:
\[
K := \varphi^{-1}(N) = H \times N_2.
\]
This closed subgroup has finite index in $G$, hence is open. Let $U$ be an open neighborhood of $e$ contained in $K$. Since $N$ is the intersection of all open neighborhoods of $e$, $N\subseteq U$.
Hence we have $N_2 \subseteq U \subseteq H \times N_2$. This implies:
\begin{equation}\label{decom}
U = U_1 \times N_2
\end{equation}
where $U_1$ is open in $(H, \tau_H)$ with $\tau_H$ being the subspace topology inherited from $(G, \tau)$. Let $\tau_F$ denote the group topology on $F$ with local base $\{N_2\}$. Then $U = U_1 \times N_2 \in \tau_H \times \tau_F$, establishing $\tau \leq \tau_H \times \tau_F$.

Conversely, for any open neighborhood $V$ of $e$ in $(H, \tau_H)$, choose an open neighborhood $U$ of $e$ in $(G, \tau)$ with $U \cap H \subseteq V$. Assuming $U \subseteq K$, decomposition \eqref{decom} gives $U = U_1 \times N_2$ where $U_1 \subseteq V$. Thus $U \subseteq V \times N_2$, proving $\tau_H \times \tau_F \leq \tau$. We conclude that $\tau = \tau_H \times \tau_F$, as required.
\end{proof}

\begin{remark}\label{RemModularExtension}
The nilpotency condition on $F$ is not essential to the argument presented. This observation permits the construction of groups with modular lattice structures on their group topologies beyond the nilpotent category. Specifically, one can systematically produce:
\begin{itemize}
\item Non-nilpotent finite groups
\item Infinite groups with torsion elements
\end{itemize}
that maintain the modularity property in their lattice of group topologies. The key mechanism resides in the product topology construction combined with coprime order arguments from Lemma \ref{Le03}.
\end{remark}
We now turn to the second aim of this section, say, to prove the lattice $\mathcal{L}_G$ is semi-modular when $G$ is nilpotent.
The following lemma comes from \cite[Proposition 3.1]{HPTX1}.

\begin{lemma}\label{Le2.2}
Let $G$ be an infinite group and $N$ a normal subgroup of $G$. Suppose $\sigma, \tau\in \mathcal{L}_G$ such that $\sigma\prec\tau$, then
\begin{itemize}
  \item [(a)] $\sigma/N\preceq \tau/N$ (in the lattice $\mathcal{L}_{G/N}$); and
  \item [(b)] if $N$ is additionally assumed to be central, then $\sigma\res_N\preceq \tau\res_N$ (in the lattice $\mathcal{L}_N$).
\end{itemize}
\end{lemma}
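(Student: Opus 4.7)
The plan is to prove both parts by contradiction, following a common template: assume there is a topology $\lambda$ strictly between the alleged cover endpoints (either $\sigma/N<\lambda<\tau/N$ for (a), or $\sigma\res_N<\lambda<\tau\res_N$ for (b)); lift $\lambda$ to a group topology $\mu$ on $G$ with $\sigma<\mu<\tau$; and obtain a contradiction with $\sigma\prec\tau$. Thus if the two endpoints already coincide, $\preceq$ holds trivially, and otherwise no strict intermediate is possible, so the relation is a cover.

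For (a), I would let $\pi\colon G\to G/N$ denote the canonical projection and define $\pi^{*}(\lambda)$ to be the initial group topology on $G$ whose basic opens are the preimages $\pi^{-1}(W)$ of $\lambda$-opens $W\subseteq G/N$. Because $\pi\colon(G,\tau)\to(G/N,\tau/N)$ is continuous and $\lambda\leq\tau/N$, every such $\pi^{-1}(W)$ is $\tau$-open, so $\pi^{*}(\lambda)\leq\tau$; hence $\mu:=\sigma\vee\pi^{*}(\lambda)$ satisfies $\sigma\leq\mu\leq\sigma\vee\tau=\tau$. By Fact~\ref{Fact}(i), $\mu$ has a local basis at $e$ of the form $\{U\cap\pi^{-1}(W):U\in\mathcal{B}_\sigma,\,W\in\mathcal{B}_\lambda\}$. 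Pushing forward through the open surjection $\pi$ yields a local basis $\{\pi(U)\cap W\}$ at $eN$ for $\mu/N$, which generates the join $\sigma/N\vee\lambda=\lambda$ (using $\sigma/N\leq\lambda$). If $\lambda$ were strictly between $\sigma/N$ and $\tau/N$, then $\mu/N=\lambda$ would force $\mu\neq\sigma$ and $\mu\neq\tau$, contradicting $\sigma\prec\tau$.

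For (b), centrality of $N$ makes $N$ abelian, and hence the centre of $N$ (namely $N$ itself) is open in every group topology on $N$, enabling the use of Fact~\ref{Fact}(ii) inside $N$. I would define $\mu$ on $G$ by the local basis $\mathcal{B}_\mu:=\{UV:U\in\mathcal{B}_\tau,\,V\in\mathcal{B}_\lambda\}$ at $e$. Pontryagin's axioms (Theorem~\ref{Pont}) then follow from those of $\tau$ and $\lambda$, with the conjugation axiom (iv) handled by $gVg^{-1}=V$ for $V\subseteq N$. The inequality $\mu\leq\tau$ is immediate since $UV\supseteq U\in\mathcal{B}_\tau$. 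For $\sigma\leq\mu$, given a $\sigma$-neighbourhood $U_0$ of $e$, I would choose a $\sigma$-neighbourhood $U_0'\subseteq U_0$ with $(U_0')^{2}\subseteq U_0$, then $U\in\mathcal{B}_\tau$ with $U\subseteq U_0'$ (using $\sigma\leq\tau$) and $V\in\mathcal{B}_\lambda$ with $V\subseteq U_0'\cap N$ (using that $U_0'\cap N$ is a $\sigma\res_N$-neighbourhood and $\sigma\res_N\leq\lambda$); then $UV\subseteq(U_0')^{2}\subseteq U_0$. Finally, the subspace basis at $e$ in $\mu\res_N$ is $\{UV\cap N\}=\{(U\cap N)V\}$, which by Fact~\ref{Fact}(ii) applied in $N$ is a local basis for $\tau\res_N\wedge\lambda=\lambda$. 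A strict intermediate $\lambda$ would then give $\sigma<\mu<\tau$, contradicting $\sigma\prec\tau$.

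The main technical obstacle is the construction of $\mu$ in (b). Three delicate points all rely on centrality of $N$: verifying Pontryagin's axioms for $\mathcal{B}_\mu$ (conjugation stability $gVg^{-1}=V$ and symmetry of products like $U'V'U'V'=(U')^{2}(V')^{2}$); identifying $\mu\res_N$ with $\lambda$ through Fact~\ref{Fact}(ii), whose applicability requires an open centre in $N$; and the inclusion $\sigma\leq\mu$, which is the unique step consuming the hypothesis $\sigma\res_N\leq\lambda$ by letting $U_0'\cap N$ serve as a $\lambda$-neighbourhood. Part (a) is comparatively routine, as the pullback $\pi^{*}$ is a well-behaved lattice operation.
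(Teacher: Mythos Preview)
The paper does not give its own proof of this lemma: it is stated with the sentence ``The following lemma comes from \cite[Proposition 3.1]{HPTX1}'' and then used as a black box. So there is nothing in the paper to compare your argument against directly.

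Your proposal is correct. A few remarks on the details you flag as delicate:

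For (a), the identity $\pi(U\cap\pi^{-1}(W))=\pi(U)\cap W$ holds because $\pi^{-1}(W)$ is saturated, so your computation of $\mu/N$ as $\sigma/N\vee\lambda=\lambda$ is exactly right; this is essentially the content of the paper's Lemma~\ref{Le:Oct1} specialized to $\tau^*=\pi^*(\lambda)$.

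For (b), your construction of $\mu$ via $\mathcal{B}_\mu=\{UV:U\in\mathcal{B}_\tau,\ V\in\mathcal{B}_\lambda\}$ is a close cousin of the paper's Lemma~\ref{Le:June1}, with the second factor living on $N$ rather than on $G$; centrality of $N$ makes $V'U'=U'V'$ so that axioms (i),(ii),(iii) of Theorem~\ref{Pont} go through exactly as you indicate, and (iv) reduces to $gVg^{-1}=V$. The identification $UV\cap N=(U\cap N)V$ is valid because $V\subseteq N$ forces $u\in N$ whenever $uv\in N$, and then Fact~\ref{Fact}(ii) applied inside the abelian group $N$ gives $\mu\res_N=\tau\res_N\wedge\lambda=\lambda$. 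Your verification of $\sigma\leq\mu$ via $(U_0')^2\subseteq U_0$ is the correct use of the hypothesis $\sigma\res_N\leq\lambda$. All the pieces fit.
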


\begin{corollary}\label{Coro2.3}
Let $G$ be a group with a central subgroup $N$ and $\sigma\leq \tau\in \mathcal{L}_G$. Then $\sigma\prec\tau$ if and only if one of the following holds:
\begin{itemize}
  \item [(a)] $\sigma\res_N=\tau\res_N$ and $\sigma/N\prec \tau/N$;
  \item [(b)] $\sigma\res_N\prec \tau\res_N$ and $\sigma/N=\tau/N$.
\end{itemize}
\end{corollary}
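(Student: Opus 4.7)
The plan is to combine Lemma \ref{Le2.2} with Merzon's Lemma (Lemma \ref{Le2.1}) to reduce to four cases, two of which give (a) and (b) at once; the work concentrates on excluding the case where both restriction and quotient are strict covers.

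For the forward direction, assume $\sigma\prec\tau$. Lemma \ref{Le2.2} gives $\sigma\res_N\preceq\tau\res_N$ in $\mathcal{L}_N$ and $\sigma/N\preceq\tau/N$ in $\mathcal{L}_{G/N}$, leaving four cases. If both relations are equalities, Merzon's Lemma forces $\sigma=\tau$, contradicting $\sigma\prec\tau$. The two asymmetric cases are precisely (a) and (b). To exclude the remaining case (both relations strict) I would construct an intermediate topology $\lambda\in\mathcal{L}_G$ with $\sigma<\lambda<\tau$ by declaring
\[
\mathcal{B}_\lambda := \{U(V\cap N) : U\in\mathcal{B}_\tau,\ V\in\mathcal{B}_\sigma\}
\]
to be a neighbourhood basis at $e$, where $\mathcal{B}_\sigma,\mathcal{B}_\tau$ are local bases for $\sigma,\tau$. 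Verifying Pontryagin's axioms (Theorem \ref{Pont}) is routine once one uses centrality of $N$ to commute any element of $V\cap N\subseteq Z(G)$ past elements of $G$; this is essential for axioms (i), (iii) and (iv). Since the projection $G\to G/N$ collapses $V\cap N$ to $\{eN\}$, one obtains $\lambda/N=\tau/N$; and since $U(V\cap N)\cap N=(U\cap N)(V\cap N)$, Fact \ref{Fact}(ii) applied in the abelian group $N$ identifies $\lambda\res_N$ with $\sigma\res_N\wedge\tau\res_N=\sigma\res_N$. Because the quotient of $\lambda$ differs from that of $\sigma$ and the restriction of $\lambda$ differs from that of $\tau$, one concludes $\sigma<\lambda<\tau$, contradicting $\sigma\prec\tau$.

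For the converse, assume (a); the case (b) is symmetric. For any $\lambda$ with $\sigma\leq\lambda\leq\tau$, squeezing gives $\lambda\res_N=\sigma\res_N=\tau\res_N$, while $\sigma/N\prec\tau/N$ together with monotonicity of quotient under refinement forces $\lambda/N\in\{\sigma/N,\tau/N\}$. Merzon's Lemma then pins $\lambda$ to $\sigma$ or $\tau$, whence $\sigma\prec\tau$. The main obstacle is the Pontryagin verification for $\mathcal{B}_\lambda$ and tracking how centrality of $N$ is used in two distinct ways: to handle the mixed products $U(V\cap N)$ in the topological-group axioms on $G$, and to invoke Fact \ref{Fact}(ii) inside the abelian subgroup $N$ when identifying $\lambda\res_N$.
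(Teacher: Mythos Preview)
Your argument is correct, and for the converse direction it is essentially the paper's proof verbatim: take $\sigma\lneq\lambda\leq\tau$, squeeze on the restriction side, force a dichotomy on the quotient side, and apply Merzon's Lemma.

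For the forward direction your argument is actually \emph{more complete} than the paper's. The paper invokes Lemma~\ref{Le2.2} to get $\sigma\res_N\preceq\tau\res_N$ and $\sigma/N\preceq\tau/N$, then rules out the $({=},{=})$ case via Merzon and declares necessity proved, never addressing the $({\prec},{\prec})$ possibility. You handle that case explicitly with the intermediate topology $\lambda$ generated by $\{U(V\cap N):U\in\mathcal{B}_\tau,\ V\in\mathcal{B}_\sigma\}$. The verification is sound: centrality of $N$ makes the Pontryagin axioms go through; $\pi(U(V\cap N))=\pi(U)$ gives $\lambda/N=\tau/N$; the identity $U(V\cap N)\cap N=(U\cap N)(V\cap N)$ combined with Fact~\ref{Fact}(ii) in the abelian group $N$ gives $\lambda\res_N=\tau\res_N\wedge\sigma\res_N=\sigma\res_N$; and $\sigma\leq\lambda\leq\tau$ follows since each $U(V\cap N)$ contains $U$ and is contained in some $\sigma$-neighbourhood once one uses $\sigma\leq\tau$. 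This genuinely plugs a gap the paper leaves, and the construction is in the spirit of Lemma~\ref{Le:June1} which the paper proves later for a related purpose.
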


\begin{proof}
Suppose first that $\sigma\prec \tau$. Then $\sigma\res_N\preceq \tau\res_N$ and $\sigma/N\preceq \tau/N$, according to Lemma \ref{Le2.2}. Since $\sigma\neq \tau$, by Lemma \ref{Le2.1} these two equalities cannot hold simultaneously. Thus the necessity is proved.

Conversely, let $\sigma\lneq \lambda\leq \tau$. If (a) holds, then $\sigma\res_N=\lambda\res_N=\tau\res_N$, and by Lemma \ref{Le2.1} we deduce $\lambda/N\neq \sigma/N$. Since $\sigma/N\prec \tau/N$, it follows that $\lambda/N=\tau/N$. Applying Lemma \ref{Le2.1} again, we obtain $\lambda=\tau$. The case (b) follows by a similar argument.
\end{proof}

\begin{remark}\label{Re1}{\em It is an easy exercise that one may replace all “$\prec$” by “$\preceq$” in the above corollary.}\end{remark}

The next lemma is a generalization of Fact \ref{Fact} (ii).

\begin{lemma}\label{Le:June1}
Let $G$ be a group with $N$ a central subgroup. If $\tau, \sigma\in \mathcal{L}_G$ satisfy $\sigma/N\leq \tau/N$, then the family
$$
\mathcal{B}=\{UV: U\in \mathcal{B}_\tau,\; V\in\mathcal{B}_\sigma\}
$$
forms a local basis at $e$ in $(G, \sigma\wedge\tau)$, where $\mathcal{B}_\tau$ (resp., $\mathcal{B}_\sigma$) is a neighbourhood basis of $e$ in $(G, \tau)$ (resp., $(G, \sigma)$).
\end{lemma}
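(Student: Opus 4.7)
The aim is to show that $\mathcal{B}$ is a local basis at $e$ for $\sigma \wedge \tau$; that is, (a) each $UV \in \mathcal{B}$ is a $(\sigma \wedge \tau)$-neighbourhood of $e$, and (b) every $(\sigma \wedge \tau)$-neighbourhood of $e$ contains some $UV \in \mathcal{B}$. I begin with (b), which is the easier half and follows the template of Fact~\ref{Fact}(ii): given a $(\sigma \wedge \tau)$-neighbourhood $W$ of $e$, continuity of multiplication furnishes a $(\sigma \wedge \tau)$-neighbourhood $W_1$ of $e$ with $W_1 W_1 \subseteq W$; since $\sigma \wedge \tau \le \sigma, \tau$, the set $W_1$ is both a $\sigma$- and a $\tau$-neighbourhood of $e$, so I choose $U \in \mathcal{B}_\tau$ and $V \in \mathcal{B}_\sigma$ inside $W_1$, yielding $UV \subseteq W_1 W_1 \subseteq W$.

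The key preparatory fact for (a) is that $VN$ is $(\sigma \wedge \tau)$-open for every $V \in \mathcal{B}_\sigma$. Indeed, $VN = \bigcup_{n \in N} Vn$ is $\sigma$-open, so $VN/N \in \sigma/N$; the hypothesis $\sigma/N \le \tau/N$ then forces $VN/N \in \tau/N$, making $VN$ also $\tau$-open. Applying Proposition~\ref{Prop:Oct11}(ii) gives $(\sigma \wedge \tau)/N = \sigma/N \wedge \tau/N = \sigma/N$, so $VN/N \in (\sigma \wedge \tau)/N$, and continuity of the quotient map $\pi \colon (G, \sigma \wedge \tau) \to (G/N, (\sigma \wedge \tau)/N)$ yields $VN = \pi^{-1}(VN/N) \in \sigma \wedge \tau$.

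For the main thrust of (a)---exhibiting a $(\sigma \wedge \tau)$-open neighbourhood of $e$ inside each given $UV \in \mathcal{B}$---I refine within the $\tau$-open set $VN$: choose $U^* \in \mathcal{B}_\tau$ with $U^* \subseteq U \cap VN$, so every $u^* \in U^*$ decomposes as $v'n$ with $v' \in V$ and $n \in N$, and the centrality of $N$ lets these $n$'s slide freely through products. The main obstacle I anticipate is absorbing the residual $N$-factors that appear in a direct Pontryagin-style computation: a naïve product of two elements of a refined basis member lands in a set of the form $V' N$ rather than inside $UV$, and additional care is needed. I would sidestep this by introducing the group topology $\lambda'$ generated by $\mathcal{B}$; since each $UV$ contains both the $\sigma$-neighbourhood $V$ and the $\tau$-neighbourhood $U$, one immediately has $\lambda' \le \sigma \wedge \tau$. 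Merzon's Lemma~\ref{Le2.1} then reduces $\lambda' = \sigma \wedge \tau$ to the two verifications $\lambda'/N = (\sigma \wedge \tau)/N$ and $\lambda' \res_N = (\sigma \wedge \tau) \res_N$---the first being $\sigma/N$ on both sides via Proposition~\ref{Prop:Oct11}(ii) and the image of $\mathcal{B}$---after which each $UV \in \mathcal{B}$ is automatically a $(\sigma \wedge \tau)$-neighbourhood of $e$, completing (a).
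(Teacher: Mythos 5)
There is a genuine gap, and it sits exactly at the heart of part (a). The pieces you do prove (the cofinality argument (b), and the observation that $VN$ is $(\sigma\wedge\tau)$-open) are correct but either routine or peripheral; the actual content of the lemma is the step you label an ``obstacle'' and then claim to sidestep, namely that $\mathcal{B}$ satisfies the axioms of a neighbourhood base at $e$ for a group topology (conditions (i)--(v) of Theorem \ref{Pont}). Your sidestep is circular: ``the group topology $\lambda'$ generated by $\mathcal{B}$'' only has $\mathcal{B}$ as a local base at $e$ if those axioms hold, which is precisely the point in question; for an arbitrary family of sets containing $e$ no such group topology need exist, and under any weaker reading of ``generated'' the final claim you need --- that each $UV\in\mathcal{B}$ is automatically a $(\sigma\wedge\tau)$-neighbourhood of $e$ --- does not follow from an equality $\lambda'=\sigma\wedge\tau$ alone. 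Conversely, if $\mathcal{B}$ were already known to be a base of a group topology $\lambda'$, then Merzon's Lemma \ref{Le2.1} would be superfluous: $\lambda'\le\sigma\wedge\tau$ because each $UV$ contains $U$ and $V$, and your step (b) gives $\sigma\wedge\tau\le\lambda'$, so equality is immediate. Note also that your Merzon reduction leaves its second hypothesis, $\lambda'\res_N=(\sigma\wedge\tau)\res_N$, entirely unverified.

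What is actually needed --- and what the paper's proof supplies --- is the computation you stopped short of: choose a symmetric $V_0\in\mathcal{B}_\sigma$ with $V_0V_0V_0\subseteq V$, use $\sigma/N\le\tau/N$ to find a symmetric $U_0\in\mathcal{B}_\tau$ with $U_0\subseteq V_0N$ and $U_0\subseteq U$, and then exploit centrality of $N$ to commute the residual $N$-factor away: for $x=vu\in V_0U_0=(U_0V_0)^{-1}$, writing $u=v'n=nv'$ with $v'\in V_0$, $n\in N$, one gets $x=uv'^{-1}vv'\in U_0V_0V_0V_0\subseteq UV$. This yields Pontryagin's condition (ii) for $\mathcal{B}$, from which condition (i) follows, the remaining conditions being routine; only then is $\mathcal{B}$ a base of a group topology, and your step (b) identifies that topology with $\sigma\wedge\tau$. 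This is exactly the absorption of ``residual $N$-factors'' you identified but did not carry out, so as written the proposal does not prove the lemma.
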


\begin{proof}
We must show that the family $\mathcal{B}$ satisfies conditions (i)–(v) in Theorem \ref{Pont}. Take arbitrary $U\in \mathcal{B}_\tau$ and $V\in \mathcal{B}_\sigma$.

First, we prove (ii). Choose a symmetric neighbourhood $V_0$ of $e$ in $(G, \sigma)$ such that
$$
V_0V_0V_0\subseteq V.
$$
By the inequality $\sigma/N\leq \tau/N$, one can also find a symmetric neighbourhood $U_0$ of $e$ in $(G, \tau)$ satisfying
\begin{equation}\label{eq1}
U_0\subseteq V_0N.
\end{equation}
Shrinking $U_0$ if necessary, assume that $U_0\subseteq U$. Now take an arbitrary $x\in (U_0V_0)^{-1}=V_0U_0$. Write $x=vu$ with $u\in U_0$ and $v\in V_0$. By (\ref{eq1}), we can further choose $v'\in V$ and $n\in N$ such that
$$
u=v'n=nv'.
$$
Since $n$ is central in $G$, we have
$$
x=vu=vv'n=nvv'=nv'v'^{-1}vv'=uv'^{-1}vv'\in U_0V_0V_0V_0\subseteq UV.
$$
Thus, $(U_0V_0)^{-1}\subseteq UV$, verifying (ii).

Next, we verify (i). Take a neighbourhood $U_1\in \mathcal{B}_\tau$ (resp., $V_1\in \mathcal{B}_\sigma$) such that $U_1U_1\subseteq U$ (resp., $V_1V_1\subseteq V$). By the property (ii) already proved, choose symmetric identity neighbourhoods $U_2$ in $(G, \tau)$ and $V_2$ in $(G, \sigma)$ such that
$$
V_2U_2\subseteq U_1V_1.
$$
Assume $U_2\subseteq U_1$ and $V_2\subseteq V_1$. Then
$$
(U_2V_2)(U_2V_2)=U_2(V_2U_2)V_2\subseteq U_1(U_1V_1)V_1=(U_1U_1)(V_1V_1)\subseteq UV.
$$

For (iii), let $x\in UV$, so that $x=uv$ with $u\in U$ and $v\in V$. Choose $U_0\in \mathcal{B}_\tau$ such that 
$$
U_0u\subseteq U,
$$
and $V_0\in \mathcal{B}_\sigma$ such that
$$
(u^{-1}V_0u)v\subseteq V.
$$
Then,
$$
U_0V_0x=U_0V_0uv=U_0(uu^{-1})V_0uv=(U_0u)(u^{-1}V_0u)v\subseteq UV.
$$

For (iv), for any $x\in G$, choose $U'\in \mathcal{B}_\tau$ and $V'\in \mathcal{B}_\sigma$ with
$$
xU'x^{-1}\subseteq U \quad\text{and}\quad xV'x^{-1}\subseteq V.
$$
Then,
$$
xUVx^{-1}=xUx^{-1}\,xVx^{-1}\subseteq UV.
$$

Finally, (v) is straightforward. For any $U_1, U_2\in \mathcal{B}_\tau$ and $V_1, V_2\in \mathcal{B}_\sigma$, choose $U_3\in \mathcal{B}_\tau$ and $V_3\in \mathcal{B}_\sigma$ with
$$
U_3\subseteq U_1\cap U_2 \quad\text{and}\quad V_3\subseteq V_1\cap V_2.
$$
Then,
$$
U_3V_3\subseteq (U_1\cap U_2)(V_1\cap V_2)\subseteq U_1U_2\cap V_1V_2.
$$
This completes the verification.
\end{proof}

\begin{remark}
It is easy to see that the family
$$
\{VU: U\in \mathcal{B}_\tau,\; V\in\mathcal{B}_\sigma\}
$$
is also a neighbourhood basis at $e$ in $(G, \sigma\wedge\tau)$.
\end{remark}

\begin{lemma}\label{Le:Oct1}
For a group $G$ with a normal subgroup $N$ and a pair $\sigma, \tau$ of group topologies on $G$, let $\tau^*$ be the group topology on $G$ with the identity neighbourhood basis $\{UN: e\in U\in \tau\}$. Then
$$
(\sigma\vee\tau^*)/N=\sigma/N\vee\tau/N.
$$
\end{lemma}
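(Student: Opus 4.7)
The key observation is that $\tau^*$ is the coarsest group topology on $G$ whose quotient on $G/N$ equals $\tau/N$: every basic $\tau^*$-neighbourhood $UN$ is a union of right-translates of the $\tau$-open set $U$, hence $\tau^*\le\tau$, while $\pi(UN)=\pi(U)$ (where $\pi\colon G\to G/N$ is the canonical map) yields $\tau^*/N=\tau/N$. The plan is to verify both inequalities in the claimed equality.

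The ``$\ge$'' direction is immediate: from $\sigma\le\sigma\vee\tau^*$ and $\tau^*\le\sigma\vee\tau^*$, passing to quotients gives $\sigma/N\le(\sigma\vee\tau^*)/N$ and $\tau/N=\tau^*/N\le(\sigma\vee\tau^*)/N$, whence $\sigma/N\vee\tau/N\le(\sigma\vee\tau^*)/N$.

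For ``$\le$'', I would describe identity-bases on both sides using Fact \ref{Fact}(i). A base at $e$ for $\sigma\vee\tau^*$ consists of the sets $U\cap VN$ with $U\in\mathcal{B}_\sigma$ and $V\in\mathcal{B}_\tau$; since $\pi$ is open (as the quotient map of a topological group by a normal subgroup), a base at $\pi(e)$ for $(\sigma\vee\tau^*)/N$ is $\{\pi(U\cap VN)\}$. On the other hand, a base at $\pi(e)$ for $\sigma/N\vee\tau/N$ is $\{\pi(U)\cap\pi(V)\}$. The problem thus reduces to the purely set-theoretic identity
$$
\pi(U\cap VN)=\pi(U)\cap\pi(V).
$$

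To establish this identity I would use two general observations about $\pi$: (a) $\pi(AN)=\pi(A)$ for any $A\subseteq G$, and (b) if $A,B$ are $N$-saturated (i.e.\ $A=AN$ and $B=BN$), then $\pi(A)\cap\pi(B)=\pi(A\cap B)$. Applying (a) and (b) in turn gives $\pi(U)\cap\pi(V)=\pi(UN)\cap\pi(VN)=\pi(UN\cap VN)$. Next I would verify the set identity $UN\cap VN=(U\cap VN)N$: the inclusion $\supseteq$ is immediate, and for the reverse, writing $y\in UN\cap VN$ as $y=un$ with $u\in U$, $n\in N$, one obtains $u=yn^{-1}\in VN$ (because $y\in VN$), hence $u\in U\cap VN$ and $y\in(U\cap VN)N$. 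Combining with (a) once more, $\pi(U\cap VN)=\pi((U\cap VN)N)=\pi(UN\cap VN)=\pi(U)\cap\pi(V)$, as required. There is no real conceptual obstacle; the main thing to watch is the careful bookkeeping with $\pi$ and the saturation trick $UN\cap VN=(U\cap VN)N$, which is where normality of $N$ is implicitly used.
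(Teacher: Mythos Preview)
Your proof is correct and follows essentially the same route as the paper's own argument: both describe a neighbourhood base of $\sigma\vee\tau^*$ as $\{U\cap VN\}$ via Fact~\ref{Fact}(i), push forward under $\pi$, and compare with the base $\{\pi(U)\cap\pi(V)\}$ for $\sigma/N\vee\tau/N$. The only difference is presentational: the paper simply asserts the key identity $\pi(V\cap UN)=\pi(V)\cap\pi(U)$ in passing, whereas you spell out its verification via the saturation trick $UN\cap VN=(U\cap VN)N$, which is a welcome addition.
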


\begin{proof}
Let $\pi: G\to G/N$ be the canonical mapping. Since
$$
\{V\cap UN: e\in V\in \sigma,\; e\in U\in \tau\}
$$
forms an identity neighbourhood basis for $(G, \sigma\vee\tau^*)$, the sets
$$
\pi(V\cap UN)=\pi(V)\cap \pi(U)
$$
form an identity neighbourhood basis for $(G/N, (\sigma\vee\tau^*)/N)$. On the other hand, the families $\{\pi(V): e\in V\in \sigma\}$ and $\{\pi(U): e\in U\in \tau\}$ are identity neighbourhood bases for $(G/N, \sigma/N)$ and $(G/N, \tau/N)$ respectively. Hence, by Proposition \ref{Prop:Oct11}, the equality
$$
(\sigma\vee\tau^*)/N=\sigma/N\vee\tau/N
$$
has been obtained.
\end{proof}

Now we are going to give the main result of this section.

\begin{theorem}\label{SemiMod}
Let $G$ be a group and $N$ a central subgroup. Then $\mathcal{L}_G$ is semi-modular if and only if $\mathcal{L}_{G/N}$ is semi-modular.
\end{theorem}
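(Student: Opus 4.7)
The plan is to prove both implications of the equivalence separately.

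For the \emph{forward direction}, given a cover $\bar\sigma \prec \bar\tau$ in $\mathcal{L}_{G/N}$ and any $\bar\lambda \in \mathcal{L}_{G/N}$, I will lift each $\bar\rho \in \{\bar\sigma, \bar\tau, \bar\lambda\}$ to the topology $\rho^* \in \mathcal{L}_G$ whose identity neighbourhood basis consists of $\pi$-preimages of basic $\bar\rho$-neighbourhoods of $\bar e$, so that $\rho^*/N = \bar\rho$ and $\rho^*$ has $N$-saturated basis. Any $\rho \in [\sigma^*, \tau^*]$ has $\rho\res_N$ trapped between two indiscrete topologies and hence indiscrete, forcing $\rho$ to be a lift (by translation invariance, every $\rho$-open containing $e$ contains $N$ and so is $N$-saturated). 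Thus $[\sigma^*, \tau^*] \cong [\bar\sigma, \bar\tau]$, giving $\sigma^* \prec \tau^*$. Semi-modularity of $\mathcal{L}_G$ yields $\sigma^* \vee \lambda^* \preceq \tau^* \vee \lambda^*$; projecting via Lemma \ref{Le2.2}(a) combined with Lemma \ref{Le:Oct1} (which shows the join of lifts is a lift, with quotient equal to the join of quotients) produces $\bar\sigma \vee \bar\lambda \preceq \bar\tau \vee \bar\lambda$.

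For the \emph{backward direction}, I fix $\sigma \prec \tau$ in $\mathcal{L}_G$ and arbitrary $\lambda \in \mathcal{L}_G$, and invoke Corollary \ref{Coro2.3} (using centrality of $N$) to split into two cases. In Case (a), where $\sigma\res_N = \tau\res_N$ and $\sigma/N \prec \tau/N$, let $\tau^N$ be the topology on $G$ with identity basis $\{UN : U \in \tau \text{ neighbourhood of }e\}$. Computing that $\sigma \vee \tau^N \leq \tau$ shares the same $N$-restriction with $\tau$ (since $\tau^N\res_N$ is indiscrete) and the same quotient $\tau/N$ with $\tau$ (by Lemma \ref{Le:Oct1}), Merzon's Lemma \ref{Le2.1} yields $\tau = \sigma \vee \tau^N$. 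Hence $\tau \vee \lambda = (\sigma \vee \lambda) \vee \tau^N$, and a further application of Lemma \ref{Le:Oct1} gives $(\tau \vee \lambda)/N = (\sigma \vee \lambda)/N \vee \tau/N$. Applying semi-modularity of $\mathcal{L}_{G/N}$ to the cover $\sigma/N \prec \tau/N$ with the element $(\sigma \vee \lambda)/N \geq \sigma/N$ produces $(\sigma \vee \lambda)/N \preceq (\tau \vee \lambda)/N$; together with the equality $(\sigma \vee \lambda)\res_N = (\tau \vee \lambda)\res_N$ from Proposition \ref{Prop:Oct11}(i), Corollary \ref{Coro2.3} via Remark \ref{Re1} concludes Case (a).

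In Case (b), where $\sigma\res_N \prec \tau\res_N$ and $\sigma/N = \tau/N$, modularity of the abelian lattice $\mathcal{L}_N$ together with Proposition \ref{Prop:Oct11}(i) immediately yields $(\sigma \vee \lambda)\res_N \preceq (\tau \vee \lambda)\res_N$. To finish, I will take any $\mu \in [\sigma \vee \lambda, \tau \vee \lambda]$ and analyse $\mu \wedge \tau$: since $\tau/N = \sigma/N \leq \mu/N$, Lemma \ref{Le:June1} applies, and $\mu \wedge \tau \in [\sigma, \tau] = \{\sigma, \tau\}$. The branch $\mu \wedge \tau = \tau$ forces $\tau \leq \mu$ and hence $\mu = \tau \vee \lambda$; the remaining branch $\mu \wedge \tau = \sigma$ must force $\mu = \sigma \vee \lambda$ via Merzon's lemma, using modularity of $\mathcal{L}_N$ to pin down $\mu\res_N$ in terms of the cover $\sigma\res_N \prec \tau\res_N$ joined with $\lambda\res_N$, and the equality $\sigma/N = \tau/N$ to pin down $\mu/N$.

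The \emph{main obstacle} is this last branch of Case (b). Unlike Case (a), no auxiliary topology $\tau' \leq \tau$ simultaneously satisfies $\tau'\res_N = \tau\res_N$ and $\tau'/N$ indiscrete — such a candidate generically violates Pontryagin's axioms — so the proof cannot mirror the clean decomposition $\tau = \sigma \vee \tau^N$ used in Case (a). The resolution will combine modularity of $\mathcal{L}_N$ with Merzon's lemma in a delicate reconciliation of restriction and quotient data, showing that an intermediate $\mu$ with $\mu \wedge \tau = \sigma$ must inherit $\mu\res_N = (\sigma \vee \lambda)\res_N$ and $\mu/N = (\sigma \vee \lambda)/N$ simultaneously.
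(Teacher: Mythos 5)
Your forward direction (lifting a cover of $\mathcal{L}_{G/N}$ to the sublattice of $N$-saturated topologies and projecting back) is sound and is essentially the paper's interval argument $[\tau_0,\tau_N]\cong\mathcal{L}_{G/N}$ in different clothing; your Case (a) of the backward direction, via the decomposition $\tau=\sigma\vee\tau^N$ and Lemma \ref{Le:Oct1}, is correct and in fact a slightly more direct route than the paper's preliminary replacement of $\tau_i$ by $\tau_i\vee\sigma^*$. The problem is exactly where you say it is: the branch of Case (b) with $\mu\wedge\tau=\sigma$ is not proved, and it is the heart of the theorem, not a technicality. Moreover, the mechanism you propose for it is unlikely to work as stated. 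To invoke Merzon's lemma you would need $\mu/N=(\sigma\vee\lambda)/N$ and $\mu\res_N=(\sigma\vee\lambda)\res_N$. But quotients do not commute with joins (Proposition \ref{Prop:Oct11} gives this only for restrictions, and Lemma \ref{Le:Oct1} only when one topology is $N$-saturated), so the hypothesis $\sigma/N=\tau/N$ does \emph{not} yield $(\sigma\vee\lambda)/N=(\tau\vee\lambda)/N$, and there is no a priori reason why $\mu/N$ should collapse to $(\sigma\vee\lambda)/N$; likewise, when $(\sigma\vee\lambda)\res_N\prec(\tau\vee\lambda)\res_N$ the restriction data alone cannot rule out $\mu\res_N=(\tau\vee\lambda)\res_N$. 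Pinning down these two invariants of $\mu$ is essentially as hard as the original covering statement, so the lattice-theoretic bookkeeping you sketch does not close the gap.

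What is actually needed here is a topological, neighbourhood-level argument, and this is precisely the paper's Case 2. In your notation: assume $\sigma\vee\lambda\leq\mu<\tau\vee\lambda$ with $\mu\wedge\tau=\sigma$. Since $\tau/N=\sigma/N\leq\mu/N$, Lemma \ref{Le:June1} gives that sets of the form $WV$ ($W$ a $\tau$-neighbourhood, $V$ a $\mu$-neighbourhood of $e$) form a basis of $\mu\wedge\tau=\sigma\leq\sigma\vee\lambda$; one then fixes a $\mu$-neighbourhood $V$, chooses $V_1$ with $V_1V_1\subseteq V$, uses $\mu\leq\tau\vee\lambda\leq\tau\vee\mu$ to find $U_1\in\sigma\vee\lambda$ (in the paper, $U_1\in\sigma$) and a $\tau$-neighbourhood $W$ with $U_1\cap W\subseteq V_1$, picks $U_2,V_2$ with $U_2V_2^{-1}\subseteq U_1$, and finally a $(\sigma\vee\lambda)$-neighbourhood $U_3\subseteq WV_2$; the element chase $u=wv$, $w=uv^{-1}\in U_1\cap W\subseteq V_1$, $u\in V_1V_2\subseteq V$ gives $U_3\subseteq V$, hence $\mu\leq\sigma\vee\lambda$. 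Your skeleton can be completed by importing exactly this computation (with $\sigma\vee\lambda$, $\tau$, $\mu$ in the roles of the paper's $\sigma$, $\tau_2'$, $\lambda$), but as written your proposal contains no argument at the decisive step, and the route it gestures at (Merzon plus modularity of $\mathcal{L}_N$) would not supply one.
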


\begin{proof}
We first prove the sufficiency. Assume that $\mathcal{L}_{G/N}$ is semi-modular. Let $\tau_1, \tau_2\in \mathcal{L}_G$ with $\tau_1\prec \tau_2$. We must show that for every $\sigma\in \mathcal{L}_G$, 
$$
\tau_1\vee\sigma\preceq\tau_2\vee\sigma.
$$
Note that $\tau_i\vee \sigma=\tau_i\vee (\tau_1\vee \sigma)$ for $i=1,2$, so we may assume $\tau_1\leq \sigma$. Our aim is then to prove that $\sigma\preceq \tau_2\vee\sigma$.

Let $\mathcal{B}_1$, $\mathcal{B}_2$, and $\mathcal{B}_\sigma$ be neighbourhood bases at $e$ in $(G, \tau_1)$, $(G, \tau_2)$, and $(G, \sigma)$ respectively. Define $\sigma^*$ to be the group topology on $G$ with the local basis
$$
\{UN: U\in \mathcal{B}_\sigma\},
$$
and set
$$
\tau_i'=\tau_i\vee\sigma^*, \quad i=1,2.
$$
Then $\sigma^*/N=\sigma/N$. Since $\sigma^*\leq \sigma$, we have
\begin{equation}\label{e1}
\tau_i'\vee \sigma=(\tau_i\vee\sigma^*)\vee \sigma=\tau_i\vee\sigma,\quad (i=1,2),
\end{equation}
and $\tau_1'\leq \sigma$. By Lemma \ref{Le:Oct1},
\begin{equation}\label{e2}
\tau_i'/N=\tau_i/N\vee\sigma/N,\quad (i=1,2).
\end{equation}
Since $\tau_1\leq \sigma$, it follows that
\begin{equation}\label{e3}
\tau_1'/N=\tau_1/N\vee\sigma/N=\sigma/N.
\end{equation}

\noindent{\bf Claim.} $\tau_1'\preceq\tau_2'$.

\begin{proof}[Proof of Claim]
Apply Corollary \ref{Coro2.3} (and Remark \ref{Re1}). If (a) holds for $\tau_1$ and $\tau_2$, then $\tau_1'/N\preceq \tau_2'/N$ by the semi-modularity of $\mathcal{L}_{G/N}$ and (\ref{e2}). Moreover, it is easy to check that
$$
\tau_1'\res_N=\tau_1\res_N=\tau_2\res_N=\tau_2'\res_N.
$$
Thus, by Remark \ref{Re1}, $\tau_1'\preceq \tau_2'$. On the other hand, if (b) holds, then
$$
\tau_1'/N=\tau_1/N\vee\sigma/N=\tau_2/N\vee\sigma/N=\tau_2'/N.
$$
Since $N$ is central, the lattice $\mathcal{L}_N$ is modular, so
$$
\tau_1'\res_N=\tau_1\res_N\prec \tau_2\res_N=\tau_2'\res_N,
$$
and by Corollary \ref{Coro2.3}, $\tau_1'\prec \tau_2'$.
\end{proof}

By (\ref{e1}), if $\tau_1'=\tau_2'$, then $\tau_1\vee\sigma=\tau_2\vee\sigma$ and we are done. Hence, assume $\tau_1'\prec \tau_2'$. Let $\mathcal{B}'_1$ and $\mathcal{B}'_2$ be local neighbourhood bases in $(G, \tau_1')$ and $(G, \tau_2')$ respectively. We now consider two cases according to Corollary \ref{Coro2.3}:

\medskip

\noindent{\bf Case 1:} $\tau_1'\res_N=\tau_2'\res_N$ and $\tau_1'/N\prec\tau_2'/N$.

Denote by $\tau_2'^*$ the group topology on $G$ with local basis 
$$
\{VN:V\in \mathcal{B}'_2\}
$$
at $e$. Then $\tau_1'\leq \tau_1'\vee \tau_2'^*\leq \tau_2'$. By Lemma \ref{Le:Oct1},
$$
(\tau_1'\vee\tau_2'^*)/N=\tau_1'/N\vee \tau_2'/N=\tau_2'/N.
$$
Also,
$$
(\tau_1'\vee\tau_2'^*)\res_N=\tau_1'\res_N=\tau_2'\res_N.
$$
Then by Lemma \ref{Le2.1}, $\tau_1'\vee\tau_2'^*=\tau_2'$. This, together with $\tau_1'\leq \sigma$, implies
$$
\tau_2'\vee \sigma=\tau_1'\vee\tau_2'^*\vee\sigma=\tau_2'^*\vee\sigma.
$$
Thus, by Lemma \ref{Le:Oct1},
$$
(\tau_2'\vee\sigma)/N=(\tau_2'^*\vee\sigma)/N=\tau_2'/N\vee\sigma/N.
$$
From (\ref{e3}),
$$
\tau_2'/N\vee \sigma/N=\tau_2'/N\vee\tau_1'/N=\tau_2'/N\succ\tau_1'/N=\sigma/N.
$$
Hence, $\sigma/N\prec \tau_2'/N$. On the other hand,
$$
(\tau_2'\vee \sigma)\res_N=\tau_2'\res_N\vee\sigma\res_N=\tau_1'\res_N\vee\sigma\res_N=\sigma\res_N.
$$
Applying Corollary \ref{Coro2.3} and (\ref{e1}), we obtain
$$
\sigma\prec \tau_2'\vee\sigma=\tau_2\vee\sigma.
$$

\medskip

\noindent{\bf Case 2:} $\tau_1'\res_N\prec\tau_2'\res_N$ and $\tau_1'/N=\tau_2'/N$.

Without loss of generality, assume $\tau_2'\nleq \sigma$ (otherwise, $\sigma=\sigma\vee\tau_2'$ and there is nothing to prove). Take $\lambda\in \mathcal{L}_G$ with $\sigma\leq \lambda< \sigma\vee\tau_2'$. We must show that $\lambda= \sigma$. Let $\mathcal{B}_\lambda$ be a neighbourhood basis at $e$ in $(G, \lambda)$. For any $V\in \mathcal{B}_\lambda$, choose $V_1\in \mathcal{B}_\lambda$ such that $V_1V_1\subseteq V$. Since $\lambda<\sigma\vee\tau_2'$, choose $U_1\in \mathcal{B}_\sigma$ and $W\in \mathcal{B}'_2$ with
$$
U_1\cap W\subseteq V_1.
$$
Because $\sigma\leq \lambda$, there exist $V_2\in \mathcal{B}_\lambda$ and $U_2\in \mathcal{B}_\sigma$ satisfying
$$
U_2V_2^{-1}\subseteq U_1.
$$
Assume $V_2\subseteq V_1$. Note that $\tau_2'/N=\tau_1'/N=\sigma/N\leq \lambda/N$. By Lemma \ref{Le:June1}, the family 
$$
\{WV: W\in \mathcal{B}'_2,\; V\in \mathcal{B}_\lambda\}
$$
forms an identity basis for $\lambda\wedge \tau_2'$. Since $\lambda$ is strictly less than $\sigma\vee\tau_2'$, we have $\tau_2'\nleq \lambda$. Thus, $\tau_1'\leq \lambda\wedge\tau_2'<\tau_2'$, which implies $\lambda\wedge\tau_2'=\tau_1'$. In particular, $\lambda\wedge\tau_2'\leq \sigma$. Therefore, there exists $U_3\in \mathcal{B}_\sigma$ such that
$$
U_3\subseteq WV_2.
$$
Then for any $u\in U_3$, there exist $w\in W$ and $v\in V_2$ with $u=wv$. Hence,
$$
w=uv^{-1}\in U_2V_2^{-1}\subseteq U_1.
$$
Thus,
$$
w\in U_1\cap W\subseteq V_1.
$$
Finally,
$$
u=wv\in V_1V_2\subseteq V_1V_1\subseteq V.
$$
It follows that $U_3\subseteq V$, so $\lambda\leq \sigma$. Therefore, $\lambda=\sigma$, proving $\sigma\prec \sigma\vee\tau_2'$.
 
In either case, we deduce
$$
\sigma\preceq \sigma\vee\tau_2'=\sigma\vee\tau_2, \quad \text{(by (\ref{e1}))}.
$$
Hence, $\mathcal{L}_G$ is semi-modular.

Now suppose that $\mathcal{L}_G$ is semi-modular. Let $\tau_0$ be the anti-discrete topology on $G$, i.e., the least element in $\mathcal{L}_G$, and let $\tau_N$ be the group topology on $G$ with the identity neighbourhood basis $\{N\}$. Evidently, the mapping
$$
\pi: [\tau_0, \tau_N]\to \mathcal{L}_{G/N}
$$
sending $\tau$ to $\tau/N$ is an isomorphism of lattices. Since semi-modularity is preserved by taking intervals, $\mathcal{L}_{G/N}$ is semi-modular.
\end{proof}

\begin{corollary}[Theorem \ref{Th00}]
Let $G$ be a nilpotent group. Then $\mathcal{L}_G$ is semi-modular.
\end{corollary}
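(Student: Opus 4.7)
The plan is to derive the corollary by induction on the nilpotency class of $G$, using Theorem \ref{SemiMod} as the inductive engine. The key observation is that every nilpotent group admits a central subgroup (namely its center) whose quotient is nilpotent of strictly smaller class, so the hypothesis of Theorem \ref{SemiMod} is exactly what is needed to push semi-modularity up from $G/Z(G)$ to $G$.

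For the base case, when $G$ has nilpotency class $\leq 1$, the group $G$ is abelian, and the fact that $\mathcal{L}_G$ is modular (hence semi-modular) is the classical result of Lampakis cited as \cite{Lam} in the introduction.

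For the inductive step, assume $G$ has nilpotency class $c \geq 2$ and that the corollary has been established for all nilpotent groups of class strictly less than $c$. Set $N = Z(G)$, which is a central subgroup of $G$ by definition. The quotient $G/N$ is nilpotent of class $c - 1$, so by the inductive hypothesis $\mathcal{L}_{G/N}$ is semi-modular. Since $N$ is central, Theorem \ref{SemiMod} applies and delivers the semi-modularity of $\mathcal{L}_G$.

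No substantive obstacle arises here: all the technical work has already been carried out inside Theorem \ref{SemiMod}, and the present corollary is essentially a bookkeeping step that exploits the recursive nature of the upper central series. The only ingredient outside Theorem \ref{SemiMod} is the abelian base case, which is quoted.
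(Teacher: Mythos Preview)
Your proof is correct and follows the same approach as the paper: induction on the nilpotency class, with the abelian case as the base and Theorem~\ref{SemiMod} applied to $N=Z(G)$ for the inductive step. (One cosmetic slip: the reference \cite{Lam} is to Lamper, not ``Lampakis''.)
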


\begin{proof}
We prove this by induction on the nilpotency class of $G$. First, if $G$ is abelian, then $\mathcal{L}_G$ is modular. Now, assume that $G$ is nilpotent of class $n$ and that $\mathcal{L}_H$ is semi-modular for every nilpotent group $H$ of class $\leq n-1$. Let $Z$ be the centre of $G$. Then $G/Z$ is nilpotent of class $n-1$, so $\mathcal{L}_{G/Z}$ is semi-modular. Now apply Theorem \ref{SemiMod}.
\end{proof}

One may naturally inquire about the behavior of infinite chains in (ii) of Theorem \ref{Th00}. More precisely, for a nilpotent group $G$, must two (potentially infinite) non-refinable chains in $\mathcal{L}_G$ with identical endpoints necessarily share the same cardinality? We recall that the answer is affirmative when at least one chain is finite. However, this property fails in general, even for abelian groups. We establish this through the following construction, adopting additive notation for abelian groups with identity element $0$.

\begin{lemma}\label{Le:Densub}
Let $G = \mathbb{Z}(p)^\omega$ where $p$ is prime. Then $G$ contains two non-refinable chains of dense subgroups sharing the same endpoints, where one chain has cardinality $\omega$ and the other $2^\omega$.
\end{lemma}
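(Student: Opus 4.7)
The plan is to reduce the statement to an assertion about chains in the subspace lattice of an $\omega$-dimensional $\mathbb{F}_p$-vector space, and then to exhibit two such chains explicitly. Set $A := \bigoplus_{\omega} \mathbb{Z}(p)$, embedded in $G = \mathbb{Z}(p)^\omega$ as the canonical direct-sum subgroup; this $A$ is dense in $G$. Since $|G/A| = 2^\omega > \omega$, one can inductively pick $v_0, v_1, \dots \in G$ whose images in $G/A$ are $\mathbb{F}_p$-linearly independent; such a sequence is automatically linearly independent already in $G$, so $W := \mathrm{span}\{v_n : n < \omega\}$ satisfies $W \cap A = 0$, and $B := A \oplus W$ is a countable subgroup of $G$ containing $A$ (hence dense) with $B/A \cong W \cong \bigoplus_\omega \mathbb{F}_p$. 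The interval $[A,B]$ in the subgroup lattice of $G$ is lattice-isomorphic to the subspace lattice of $V := B/A$, and every member of this interval contains $A$ and is therefore dense in $G$. It therefore suffices to produce, in the subspace lattice of $V$, two non-refinable chains from $\{0\}$ to $V$ of cardinalities $\omega$ and $2^\omega$.

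For the $\omega$-chain, fix a basis $(f_n)_{n<\omega}$ of $V$, put $U_n := \mathrm{span}(f_0,\dots,f_{n-1})$, $U_\omega := V$, and take $\mathcal{C}_1 := \{U_n : n \le \omega\}$. Non-refinability is immediate: if $U \subseteq V$ is comparable to every element of $\mathcal{C}_1$, then considering $n^* := \sup\{n<\omega : U_n \subseteq U\}$ together with $\dim(U_{k+1}/U_k)=1$ forces $U$ to equal some $U_k$ or $V$.

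The bulk of the work is the $2^\omega$-chain. Identify $V$ with $\bigoplus_{q \in \mathbb{Q}} \mathbb{F}_p$ (permissible as $|\mathbb{Q}| = \omega = \dim V$) with basis $(e_q)_{q \in \mathbb{Q}}$, and for $r \in \mathbb{R}$, $s \in \mathbb{Q}$ set
\[
W_r := \mathrm{span}\{e_q : q \le r\}, \qquad V_s := \mathrm{span}\{e_q : q < s\}.
\]
Take $\mathcal{C}_2 := \{\{0\},V\} \cup \{W_r : r \in \mathbb{R}\} \cup \{V_s : s \in \mathbb{Q}\}$; a short check confirms $\mathcal{C}_2$ is totally ordered by inclusion, has endpoints $\{0\}$ and $V$, and has cardinality $2^\omega$. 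The main obstacle is proving non-refinability. Given a subspace $U \subseteq V$ comparable to every element of $\mathcal{C}_2$, define $r^* := \sup\{r \in \mathbb{R} : W_r \subseteq U\} \in [-\infty,+\infty]$. Since $\{r : W_r \subseteq U\}$ is down-closed, comparability forces $\bigcup_{r<r^*} W_r \subseteq U \subseteq \bigcap_{r>r^*} W_r$; using the identities $\bigcup_{r<r^*} W_r = V_{r^*}$ when $r^* \in \mathbb{Q}$ (and $= W_{r^*}$ when $r^* \in \mathbb{R}\setminus\mathbb{Q}$), together with $\bigcap_{r>r^*} W_r = W_{r^*}$, a case-split on $r^* \in \{\pm\infty\}\cup\mathbb{Q}\cup(\mathbb{R}\setminus\mathbb{Q})$ pins $U$ down to one of $\{0\}, V, W_{r^*}, V_{r^*}$, all lying in $\mathcal{C}_2$. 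The auxiliary $V_s$'s were included specifically to block the ``rational-cut'' refinements of $\{W_r\}_{r\in\mathbb{R}}$: without them, each $V_s$ with $s\in\mathbb{Q}$ would be a subspace comparable to every $W_r$ yet outside the chain. Finally, transporting $\mathcal{C}_1$ and $\mathcal{C}_2$ back to $[A,B]$ along the lattice isomorphism produces the two desired non-refinable chains of dense subgroups of $G$ with common endpoints $A$ and $B$, of cardinalities $\omega$ and $2^\omega$.
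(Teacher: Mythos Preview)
Your proof is correct and follows essentially the same strategy as the paper: pass to the interval $[A,A\oplus W]$ (the paper's $[H,H+K]$), which is isomorphic to the subspace lattice of a countably-infinite-dimensional $\mathbb{F}_p$-space, use a standard flag for the $\omega$-chain, and a Dedekind-cut chain for the $2^\omega$-chain. The only difference is cosmetic: you build the long chain explicitly over the index set $\mathbb{Q}$ and verify its maximality by hand, whereas the paper invokes the Erd\H{o}s--Tarski theorem to obtain a maximal chain of size $2^\omega$ in $\mathcal{P}(B)$ and then transports it to subspaces via $D\mapsto K_D$.
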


\begin{proof}
Let $H,K$ be subgroups of $G$ with $H \cap K = \{0\}$, where $H$ is dense and $K$ is countably infinite. Considering $K$ as a vector space over $\mathbb{Z}(p)$, let $B = \{x_n : n \in \omega\}$ be a basis. For each $n \in \omega$, define $K_n$ as the subspace generated by $\{x_1, \ldots, x_n\}$. Setting $P_n = H + K_n$, we obtain the chain:
\[
\mathcal{C} := \{H, H + K_0, H + K_1, \ldots, H + K_n, \ldots, H + K\}
\]
which constitutes a non-refinable chain of dense subgroups.

By the Erd\H{o}s–Tarski theorem \cite[p.54]{CD1}, the power set of $B$ contains a maximal chain $\mathcal{D}$ of cardinality $2^\omega$. For each $D \in \mathcal{D}$, let $K_D$ be the subspace generated by $D$. Then:
\[
\mathcal{C}' := \{H + K_D : D \in \mathcal{D}\}
\]
forms another non-refinable chain with endpoints $H$ and $H + K$, inheriting the cardinality $2^\omega$ from $\mathcal{D}$.
\end{proof}

Recall that a topological group $G$ is \emph{precompact} if for every neighborhood $U$ of the identity, there exists a finite subset $F \subseteq G$ such that $G = FU$. A key result states that every Hausdorff precompact group topology densely embeds into a compact group. The collection $\mathcal{PK}_G$ of precompact group topologies forms a sublattice of $\mathcal{L}_G$. 

In their foundational work \cite{CR}, Comfort and Ross established that for abelian groups, every Hausdorff precompact topology $\tau$ is generated by a subgroup of continuous homomorphisms from $G$ to the 1-dimensional torus $\mathbb{T}$. These homomorphisms correspond precisely to dense subgroups of $\widehat{G}$ (the Pontryagin dual group of $G$). 

Crucially, this density requirement fails for non-Hausdorff precompact topologies, where the associated subgroup of $\widehat{G}$ may be arbitrary. This yields a lattice isomorphism between:
\begin{itemize}
    \item The sublattice $\mathcal{PK}_G$ of precompact topologies
    \item The lattice of subgroups of $\widehat{G}$ (ordered by inclusion)
\end{itemize}
This correspondence was first Proved by Remus in \cite{Rem}.
\begin{theorem}[Comfort-Ross Duality]
Let $G$ be a discrete abelian group with Pontryagin dual $\widehat{G}$. The mapping
\[
\varphi: \mathcal{S}(\widehat{G}) \to \mathcal{PK}_G
\]
defined by letting $\varphi(H)$ be the initial topology induced by $H \subseteq \widehat{G}$, constitutes a lattice isomorphism. Moreover, $\varphi(H)$ is Hausdorff if and only if $H$ is dense in $\widehat{G}$.
\end{theorem}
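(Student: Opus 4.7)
The plan is to construct an explicit inverse map and verify compatibility with the lattice operations and with the Hausdorff condition. Define
\[
\psi: \mathcal{PK}_G \to \mathcal{S}(\widehat{G}), \qquad \psi(\tau) = \widehat{(G,\tau)},
\]
the subgroup of $\tau$-continuous characters $G \to \mathbb{T}$. First I verify that $\varphi$ is well-defined: since each $\chi \in H$ is a group homomorphism into $\mathbb{T}$, the initial topology $\varphi(H)$ satisfies the conditions of Theorem~\ref{Pont} (preimages of symmetric neighbourhoods in $\mathbb{T}$ under homomorphisms transfer the group-topology axioms). For precompactness, the evaluation map $e_H: G \to \mathbb{T}^H$ given by $g\mapsto(\chi(g))_{\chi\in H}$ generates $\varphi(H)$ as an initial topology, and the closure of $e_H(G)$ in the compact group $\mathbb{T}^H$ serves as a compact completion of $(G,\varphi(H))$.

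Next I would show $\varphi$ and $\psi$ are mutually inverse. The inclusion $H \subseteq \psi(\varphi(H))$ is immediate. For the reverse, any $\varphi(H)$-continuous $\chi: G \to \mathbb{T}$ vanishes on $\ker(e_H) = \overline{\{e\}}^{\varphi(H)}$, hence factors through $e_H(G) \subseteq \mathbb{T}^H$ and, by compactness of $\mathbb{T}$, extends continuously to $K := \overline{e_H(G)}$, a compact abelian group. Since $\widehat{\mathbb{T}^H} = \bigoplus_{\chi \in H} \mathbb{Z}$, with characters given by finite $\mathbb{Z}$-combinations of coordinate projections, and the restriction map $\widehat{\mathbb{T}^H} \twoheadrightarrow \widehat{K}$ is surjective by Pontryagin duality for closed subgroups of compact abelian groups, the extension must itself be such a combination, forcing $\chi$ to lie in $H$. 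For the opposite composition $\varphi \circ \psi = \mathrm{id}$, given $\tau \in \mathcal{PK}_G$ with $H = \psi(\tau)$, the inequality $\varphi(H) \leq \tau$ is clear; the reverse uses that the $\tau$-completion of $G$ (after collapsing $\overline{\{e\}}^\tau$) embeds as a closed subgroup of $\mathbb{T}^H$, whence $\tau \leq \varphi(H)$.

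Since $\varphi$ is a bijection, it is a lattice isomorphism iff both $\varphi$ and $\psi$ are order-preserving. Monotonicity of $\varphi$ is clear: $H_1 \subseteq H_2$ yields more continuous characters, hence a finer initial topology. Monotonicity of $\psi$ is equally direct: $\tau_1 \leq \tau_2$ means $\tau_2$-continuity is strictly stronger than $\tau_1$-continuity, so $\psi(\tau_1) \subseteq \psi(\tau_2)$. For the Hausdorff characterization, $\varphi(H)$ is Hausdorff iff $e_H$ is injective, iff $H$ separates the points of $G$. Since $G$ is discrete abelian, Pontryagin biduality gives $G \cong \widehat{\widehat{G}}$, and by the annihilator correspondence for compact abelian groups, a subgroup $H \leq \widehat{G}$ separates points of $G = \widehat{\widehat{G}}$ exactly when $H$ is dense in $\widehat{G}$.

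The main obstacle is the identity $\psi(\varphi(H)) = H$ in the second paragraph: realizing every $\varphi(H)$-continuous character as an element of $H$ requires the structural result that characters of a compact abelian group $K$ embedded in a torus power $\mathbb{T}^I$ are generated by coordinate projections (via surjectivity of the restriction $\widehat{\mathbb{T}^I} \twoheadrightarrow \widehat{K}$ from Pontryagin duality), combined with an extension-by-density argument that must be handled carefully to accommodate the possibly non-Hausdorff case.
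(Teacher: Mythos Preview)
The paper does not supply its own proof of this theorem: it is quoted as a known result, with the character-theoretic description of Hausdorff precompact topologies attributed to Comfort and Ross \cite{CR} and the full lattice isomorphism to Remus \cite{Rem}. There is therefore no in-paper argument to compare your proposal against.

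On its own merits your outline is the standard route and is essentially correct. Two places deserve tightening. First, in the verification of $\varphi\circ\psi=\mathrm{id}$, the clause ``the $\tau$-completion of $G$ (after collapsing $\overline{\{e\}}^\tau$) embeds as a closed subgroup of $\mathbb{T}^H$, whence $\tau\leq\varphi(H)$'' is where the substance lies; to make it rigorous you must invoke that a compact abelian group $K$ carries exactly the initial topology from $\widehat{K}$ (the abelian Peter--Weyl theorem, equivalently Pontryagin duality for compact groups), and that restriction to the dense image of $G$ identifies $\widehat{K}$ with $H=\psi(\tau)$. As written this step is asserted rather than argued. Second, your phrasing ``$\tau_2$-continuity is strictly stronger than $\tau_1$-continuity'' is inverted---a finer topology makes continuity \emph{easier}, not harder---though your conclusion $\psi(\tau_1)\subseteq\psi(\tau_2)$ is the correct one. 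The handling of the non-Hausdorff case via factoring through $\ker(e_H)=\overline{\{e\}}^{\varphi(H)}$ is fine.
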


This duality yields the following counterpart to Lemma \ref{Le:Densub}:

\begin{corollary}
Let $H = \bigoplus_\omega \mathbb{Z}(p)$ be the direct sum of countably many copies of $\mathbb{Z}(p)$. Then $\mathcal{L}_H$ contains two non-refinable chains of Hausdorff precompact topologies with identical endpoints, one of cardinality $\omega$ and the other of cardinality $2^\omega$.
\end{corollary}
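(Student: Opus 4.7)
The plan is to transport Lemma \ref{Le:Densub} across the Comfort--Ross duality. The first step is to identify the Pontryagin dual of $H=\bigoplus_\omega\mathbb{Z}(p)$. Since $\mathbb{Z}(p)$ is self-dual and the dual of a discrete direct sum is the corresponding direct product, one obtains $\widehat{H}\cong\mathbb{Z}(p)^\omega$, which is precisely the group $G$ to which Lemma \ref{Le:Densub} applies.

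Next, apply Lemma \ref{Le:Densub} to $\widehat{H}$ to produce two non-refinable chains $\mathcal{C}_1,\mathcal{C}_2\subseteq\mathcal{S}(\widehat{H})$ of dense subgroups with the same endpoints, one of cardinality $\omega$ and the other of cardinality $2^\omega$. Then invoke the Comfort--Ross Duality stated just above: the map $\varphi:\mathcal{S}(\widehat{H})\to\mathcal{PK}_H$ is an order isomorphism that sends dense subgroups to Hausdorff precompact topologies. Applying $\varphi$ to every element of $\mathcal{C}_1$ and $\mathcal{C}_2$ produces two chains $\varphi(\mathcal{C}_1),\varphi(\mathcal{C}_2)$ of Hausdorff precompact group topologies on $H$ with identical endpoints, of cardinalities $\omega$ and $2^\omega$ respectively. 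Because $\varphi$ is an order isomorphism, non-refinability of a chain in $\mathcal{S}(\widehat{H})$ is equivalent to non-refinability of its image in $\mathcal{PK}_H$, so the two resulting chains are non-refinable within $\mathcal{PK}_H$.

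The one delicate point, and the step I expect to require a brief separate argument, is promoting non-refinability in the sublattice $\mathcal{PK}_H$ to non-refinability inside the ambient lattice $\mathcal{L}_H$: a priori one must rule out a non-precompact group topology slipping strictly between two consecutive members of $\varphi(\mathcal{C}_i)$. This is handled by observing that the precompact reflection $\tau\mapsto\tau^{\mathrm{pc}}$ (the supremum of all precompact topologies coarser than $\tau$, equivalently the initial topology induced by the continuous characters into $\mathbb{T}$) is an order-preserving retraction of $\mathcal{L}_H$ onto $\mathcal{PK}_H$ fixing precompact topologies; hence any intermediate group topology between two precompact ones would yield, after reflection, an intermediate precompact topology, contradicting non-refinability of $\varphi(\mathcal{C}_i)$ in $\mathcal{PK}_H$. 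With this remark the corollary follows directly from Lemma \ref{Le:Densub} and Comfort--Ross duality.
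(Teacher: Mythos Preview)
Your approach is the paper's: identify $\widehat{H}\cong\mathbb{Z}(p)^\omega$, apply Lemma~\ref{Le:Densub} to obtain the two chains of dense subgroups, and push them through the Comfort--Ross isomorphism $\varphi$. The paper in fact offers no proof at all, regarding the corollary as an immediate consequence of the duality.

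The ``delicate point'' you isolate is not actually delicate, and your reflection machinery is unnecessary. Since the upper endpoint $\tau_{\mathrm{top}}$ of each chain is precompact, \emph{every} group topology $\sigma\leq\tau_{\mathrm{top}}$ is automatically precompact: any $\sigma$-neighbourhood $U$ of $0$ is a $\tau_{\mathrm{top}}$-neighbourhood, and precompactness of $\tau_{\mathrm{top}}$ supplies a finite $F$ with $H=F+U$. Thus the interval $[\tau_{\mathrm{bot}},\tau_{\mathrm{top}}]$ computed in $\mathcal{L}_H$ coincides with the same interval computed in $\mathcal{PK}_H$, and non-refinability transfers for free. Note also that your phrasing ``slipping strictly between two consecutive members'' is inapt for the chain of cardinality $2^\omega$: that chain is order-dense and has no consecutive members, so non-refinability there means maximality as a chain in the interval, not a covering condition. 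The observation that the two intervals agree handles both chains uniformly.
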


Regarding lattice-theoretic properties, recall that a \emph{dually semi-modular lattice} (or \emph{lower semi-modular lattice}) satisfies the lower covering condition: if $b \prec a$, then $b \wedge c \preceq a \wedge c$ for all $c$. Such lattices inherently satisfy the Jordan-Hölder chain condition due to dual invariance. While one might conjecture that Theorem \ref{Th00} extends to dual semi-modularity, Arnautov's 2012 counterexample \cite{Ar12} demonstrates this fails in general.

\section{Proof of Theorem \ref{Th1}}

We introduce a new property for topological groups that exhibits deep connections with the $P_k$ properties.
Throughout this section, abelian groups are written additively. So we denote by $0$ the identity element.

\begin{definition}
A Hausdorff group topology $\tau$ on a group $G$ is said to satisfy \emph{Property Q} if every chain in the partially ordered set $\{\tau' \in \mathcal{L}_G : \tau' < \tau\}$ of Hausdorff group topologies admits an upper bound.
\end{definition}

We first establish a foundational lemma:

\begin{lemma}\label{le0}
Let $k \in \N$ and $G$ be an abelian group. Then $G$ satisfies $P_{k+1}$ if and only if:
\begin{enumerate}
    \item $G$ satisfies $P_k$;
    \item Every topology $\tau \in \mathcal{A}_k(G)$ possesses Property Q.
\end{enumerate}
\end{lemma}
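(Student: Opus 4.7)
The plan is to handle the two directions of the bi-implication separately, with $(\Leftarrow)$ being the main constructive step in which Property Q and $P_k$ jointly produce a $(k+1)$-maximal element inside a given chain.

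For $(\Leftarrow)$, assume (1) and (2), and fix an infinite non-refinable chain $\mathcal{C}$ of Hausdorff topologies. I would first invoke $P_k$ to obtain $\tau \in \mathcal{C} \cap \mathcal{A}_k(G)$. Since $\tau$ lies at depth $k$ below $\tau_d$, the upper portion $\mathcal{C}_{\ge \tau}$ is non-refinable of length at most $k$ and hence finite (Theorem \ref{JH}), so $\mathcal{C}_{<\tau}$ must be infinite. Property Q then supplies a Hausdorff upper bound $\tau^* < \tau$ for $\mathcal{C}_{<\tau}$, and non-refinability of $\mathcal{C}$ forces $\tau^* \in \mathcal{C}$, since otherwise $\tau^*$ would be a forbidden refinement sitting strictly between $\mathcal{C}_{<\tau}$ and $\tau$. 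Hence $\tau^* = \max \mathcal{C}_{<\tau}$ and $\tau^* \prec \tau$ in $\mathcal{L}_G$. Concatenating with a non-refinable chain of length $k$ from $\tau$ to $\tau_d$ yields a non-refinable chain from $\tau^*$ to $\tau_d$ of length $k+1$, so Theorem \ref{JH} identifies $\tau^* \in \mathcal{A}_{k+1}(G)$.

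For $(\Rightarrow)$, assume $P_{k+1}$. I first derive $P_k$: given any infinite non-refinable chain $\mathcal{C}$, $P_{k+1}$ supplies some $\tau_0 \in \mathcal{C} \cap \mathcal{A}_{k+1}(G)$, and the restriction $\mathcal{C} \cap [\tau_0, \tau_d]$ is a non-refinable chain between fixed endpoints; Theorem \ref{JH} forces its length to be exactly $k+1$, so it meets every depth $0, 1, \dots, k+1$, and in particular $\mathcal{A}_k(G)$. To establish Property Q at each $\tau \in \mathcal{A}_k(G)$, I argue by contradiction: if some chain $\mathcal{C}' \subseteq \{\sigma < \tau : \sigma\ \text{Hausdorff}\}$ admits no upper bound in that set, then $\mathcal{C}'$ is infinite, $\sup \mathcal{C}' = \tau$, and $\mathcal{C}'$ has no maximum. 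I graft a length-$k$ non-refinable chain from $\tau$ to $\tau_d$ on top of $\mathcal{C}' \cup \{\tau\}$, then (via Zorn) enlarge to an infinite non-refinable chain $\mathcal{C}^*$ of Hausdorff topologies. Applying $P_{k+1}$ yields some $\sigma \in \mathcal{C}^* \cap \mathcal{A}_{k+1}(G)$, and a case analysis excludes every possible position of $\sigma$: elements at or above $\tau$ have depth at most $k$; any $\sigma \in \mathcal{C}' \cap \mathcal{A}_{k+1}(G)$ would, by Theorem \ref{JH}, satisfy $\sigma \prec \tau$ and hence be the maximum of $\mathcal{C}'$, contradicting its non-existence.

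The main obstacle is the case analysis in the Property Q direction: pinning down exactly which positions in $\mathcal{C}^*$ are eligible to host a $(k+1)$-maximal element requires Jordan-Hölder for non-refinable chains routed through $\tau$, and only the hypothesized absence of a maximum in $\mathcal{C}'$ eliminates the surviving candidates. A secondary but routine point, needed in the $(\Leftarrow)$ step, is that the covering $\tau^* \prec \tau$ must be verified in $\mathcal{L}_G$ and not merely in the poset of Hausdorff topologies; this is automatic because any group topology between the Hausdorff $\tau^*$ and $\tau$ is finer than $\tau^*$ and therefore Hausdorff.
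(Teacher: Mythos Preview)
Your argument follows essentially the same route as the paper: for Property~Q in the forward direction you graft a chain above $\tau$ up to $\tau_d$, enlarge to a non-refinable chain, apply $P_{k+1}$, and pin the resulting $(k{+}1)$-maximal element just below $\tau$; for the converse you locate the $k$-maximal member of a given chain and use Property~Q to find its predecessor. Your $(\Leftarrow)$ direction is, if anything, more carefully written than the paper's (you explicitly check that $\tau^*\prec\tau$ holds in $\mathcal{L}_G$ and not merely in the Hausdorff poset).

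There is one small but genuine oversight in your case analysis for Property~Q. After enlarging $\mathcal{C}'\cup\{\tau\}\cup(\text{chain to }\tau_d)$ to a non-refinable chain $\mathcal{C}^*$, the enlargement may introduce new Hausdorff topologies \emph{below} $\tau$ that were not in $\mathcal{C}'$. Your dichotomy ``$\sigma\geq\tau$'' versus ``$\sigma\in\mathcal{C}'$'' therefore does not exhaust the positions of $\sigma\in\mathcal{C}^*\cap\mathcal{A}_{k+1}(G)$. The repair is immediate and uses exactly the idea you already have: once you know $\sigma\prec\tau$ (from Jordan--H\"older applied to $\mathcal{C}^*\cap[\sigma,\tau_d]$), comparability of $\sigma$ with every $c\in\mathcal{C}'\subset\mathcal{C}^*$ forces $c\leq\sigma$, so $\sigma$ is a Hausdorff upper bound of $\mathcal{C}'$ strictly below $\tau$, contradicting the hypothesis directly --- regardless of whether $\sigma$ happens to lie in $\mathcal{C}'$. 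The paper avoids this wrinkle by starting from a \emph{maximal} chain $\mathcal{C}$ in $\{\tau'<\tau\}$, so that the extension $\mathcal{C}^*$ adds nothing below $\tau$; you could alternatively just replace ``$\sigma\in\mathcal{C}'$'' by ``$\sigma\in\mathcal{C}^*$ with $\sigma<\tau$'' in your final sentence.
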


\begin{proof}
The forward implication ($P_{k+1} \Rightarrow P_k$) is immediate. Now assume $G$ satisfies $P_{k+1}$, $\tau\in \mathcal{A}_k(G)$ and $\mathcal{C}$ is a maximal chain of Hausdorff group topologies in $\{\tau'\in \mathcal{L}_G: \tau'<\tau\}$.
 Denote by $\mathcal{C}^*$ an non-refinable chain in $\mathcal{L}_G$ containing $\mathcal{C}\cup\{\tau\}$ and ending at the discrete topology.
 Then $P_{k+1}$ property ensures us to find a $(k+1)$-maximal topology $\tau^*$ in $\mathcal{C}^*$.
So $\tau^*<\tau$ and $\tau^*$ is an upper bound of $\mathcal{C}$ in the set $\{\tau'\in \mathcal{L}_G: \tau'<\tau\}$.

For the converse, assume $G$ satisfies $P_k$ and let $\mathcal{C}$ be a non-refinable chain of Hausdorff group topologies terminating at the discrete topology. By $P_k$, select $\tau \in \mathcal{A}_k(G) \cap \mathcal{C}$. Property Q ensures that $\{\tau' < \tau : \tau' \in \mathcal{C}\}$ contains a maximal element $\tau^*$. The non-refinability of $\mathcal{C}$ then implies $\tau^* \in \mathcal{A}_{k+1}(G)$.
\end{proof}

\begin{lemma}\label{le1}
Let $G$ be a topological abelian group endowed with a maximal group topology $\tau$. Then $(G, \tau)$ is not first-countable.
\end{lemma}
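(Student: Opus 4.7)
The plan is to derive a contradiction by constructing a group topology $\sigma$ strictly between $\tau$ and the discrete topology $\tau_d$, violating the maximality of $\tau$. Assume $(G,\tau)$ is first-countable. By the Birkhoff--Kakutani theorem, $\tau$ is induced by a left-invariant metric; fix a decreasing symmetric neighbourhood base $\{U_n\}_{n\in\omega}$ at $0$ satisfying $U_{n+1}+U_{n+1}\subseteq U_n$ and $\bigcap_n U_n=\{0\}$. Since $G$ is infinite and $\tau$ is non-discrete Hausdorff, each $U_n$ must be infinite; otherwise Hausdorffness would permit separating $0$ from the other finitely many points of $U_n$, forcing $\{0\}$ to be open.

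The key step is to inductively select, exploiting the metric structure of $\tau$, a sequence $\{x_n\}_{n\in\omega}$ with $x_n\in U_n\setminus\{0\}$ (symmetrised by also considering $-x_n$), together with auxiliary reserved elements $\{y_n\}_{n\in\omega}$ with $y_n\in U_n\setminus(\{0\}\cup A)$, where $A:=\{\pm x_m:m\in\omega\}$. One arranges the selection so that $A$ is \emph{algebraically sparse}: for each $m\in\omega$, the element $\pm x_m$ admits no representation $v_1+v_2$ with $v_1,v_2\in U_k\setminus A$ once $k$ is large enough. The inductive selection is feasible because at each stage the forbidden set is at most countable, while each $U_n$ is infinite, and the metric structure of $\tau$ provides quantitative control over how close to $0$ each $x_n$ lies.

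With $A$ and $\{y_n\}$ so constructed, put $V_n:=U_n\setminus A$ and $\mathcal{B}_\sigma:=\{V_n:n\in\omega\}$. The sparsity ensures that $\mathcal{B}_\sigma$ satisfies the five conditions of Pontryagin's Principle (Theorem~\ref{Pont}): condition (i) holds for $V_k+V_k\subseteq V_n$ with $k$ sufficiently large, since the containment $V_k+V_k\subseteq U_{k-1}\subseteq U_n$ is automatic while sparsity prevents any $\pm x_m$ from lying in $V_k+V_k$; condition (ii) follows from $-V_n=V_n$ by symmetry of $A$; conditions (iii) and (iv) are trivial as $G$ is abelian; and (v) follows by intersecting indices. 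Moreover $\bigcap V_n=\{0\}$ (using $0\notin A$), so the resulting group topology $\sigma$ is Hausdorff.

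Finally, $\tau\leq\sigma$ follows from $V_n\subseteq U_n$; strictness $\tau<\sigma$ holds because each $V_n$ excludes the tail $\{x_m:m\geq n\}$ of the $\tau$-null sequence $(x_m)$, so $V_n$ cannot be $\tau$-open; and non-discreteness of $\sigma$ is witnessed by $y_n\in V_n\setminus\{0\}$. Hence $\tau<\sigma<\tau_d$, contradicting the maximality of $\tau$. The principal obstacle is the inductive construction of the sparse sequence $\{x_n\}$: one must carefully arrange, via the metric structure, that no $\pm x_m$ is expressible as a sum of two small non-excluded elements. This sparsity step is the crux on which the entire argument rests.
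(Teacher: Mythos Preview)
Your approach diverges entirely from the paper's. The paper observes that any maximal group topology on an abelian group refines the Bohr topology, and then invokes Leptin's theorem (no infinite compact subsets in the Bohr topology) to conclude that $(G,\tau)$ has no non-trivial convergent sequences; first-countability of a non-discrete Hausdorff group would immediately produce one. That argument is two lines.

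Your construction, by contrast, has a genuine gap at the point you yourself flag as the crux. The sparsity you claim to arrange is
\[
\forall m\ \exists K(m)\ \forall k\geq K(m):\quad \pm x_m\notin (U_k\setminus A)+(U_k\setminus A),
\]
and this is indeed automatic (for fixed $m$, eventually $U_k+U_k$ misses $x_m$ by Hausdorffness). But condition~(i) of Theorem~\ref{Pont} requires, for each $n$, a \emph{single} $k$ with $V_k+V_k\subseteq V_n=U_n\setminus A$; that is,
\[
\exists k\ \forall m:\quad \pm x_m\notin (U_k\setminus A)+(U_k\setminus A).
\]
The quantifiers are swapped, and the strong form is in general unachievable. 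Take $G=\mathbb{R}$ with its usual topology (a legitimate test case: your construction is supposed to refine \emph{any} first-countable non-discrete Hausdorff abelian group topology). Given any countable symmetric $A\ni x_m\to 0$ and any $k$, pick $m$ with $|x_m|$ small relative to $U_k$; then $\{u\in U_k:u\in A\text{ or }x_m-u\in A\}$ is countable while $U_k$ is uncountable, so one easily finds $u\in U_k\setminus A$ with $x_m-u\in U_k\setminus A$, whence $x_m\in V_k+V_k$. Thus $V_k+V_k\not\subseteq V_n$ for every $k$, and $\{V_n\}$ is not a group-topology base. No inductive cleverness in choosing the $x_n$'s rescues this, because the obstruction comes from the uncountability of $U_k$ versus the countability of $A$.

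A smaller point: condition~(iii) is not ``trivial because $G$ is abelian'' --- it is the translate condition $x+V_k\subseteq V_n$, which genuinely uses that $A\cup\{0\}$ is closed and $x\notin A\cup\{0\}$. It does go through, but (i) does not, and the whole scheme collapses there.
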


\begin{proof}
It suffices to demonstrate the absence of non-trivial convergent sequences. Since maximal group topologies on abelian groups refine the Bohr topology \cite{DPS}, the conclusion follows directly from Leptin's theorem \cite[Theorem 9.9.30]{AT}, which asserts that compact subsets in the Bohr topology are necessarily finite.
\end{proof}

\begin{lemma}\label{le2}
Let $\tau$ be a Hausdorff group topology with Property Q on an infinite abelian group $G$. Then $\chi(G, \tau) \leq |G|$. In particular, $(G, \tau)$ is first-countable when $G$ is countable.
\end{lemma}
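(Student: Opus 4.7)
The plan is to establish the contrapositive: if $\chi(G,\tau) > |G|$, then $\tau$ fails Property Q. Write $\lambda := |G|$ and $\kappa := \chi(G,\tau)$, so $\lambda < \kappa$. Concretely, I will produce a chain $\{\sigma_\alpha : \alpha < \kappa\}$ of Hausdorff group topologies, each strictly below $\tau$, whose join in $\mathcal{L}_G$ equals $\tau$; no $\tau^* < \tau$ can then dominate such a chain, so Property Q is violated. Using the Hausdorffness of $\tau$, fix for each $g \in G \setminus \{0\}$ an open $\tau$-neighbourhood $U_g$ of $0$ with $g \notin U_g$; then $\mathcal{U} := \{U_g : g \neq 0\}$ has $\bigcap \mathcal{U} = \{0\}$ and $|\mathcal{U}| \leq \lambda$. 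Fix also a neighbourhood base $\{W_\beta : \beta < \kappa\}$ at $0$ for $(G,\tau)$.

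The main technical device is a Pontryagin-type closure. Given any family $\mathcal{V} \subseteq \tau$ of open neighbourhoods of $0$, close it iteratively by adding, at each stage and each $U$ already present, open $\tau$-witnesses for clauses (i), (ii), (v) of Theorem~\ref{Pont}, together with---for every $x \in U$---a witness for clause (iii); clause (iv) is automatic because $G$ is abelian. Since $|U| \leq \lambda$, each stage multiplies cardinality by at most $\lambda$, so after $\omega$ stages the resulting base $\mathcal{B}_\mathcal{V}$ has size at most $|\mathcal{V}| + \lambda$. By Theorem~\ref{Pont}, $\mathcal{B}_\mathcal{V}$ is the neighbourhood base at $0$ of a group topology $\tau_\mathcal{V}$; from $\mathcal{B}_\mathcal{V} \subseteq \tau$ one reads off $\tau_\mathcal{V} \leq \tau$, and $\bigcap \mathcal{V} = \{0\}$ forces $\tau_\mathcal{V}$ to be Hausdorff.

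Now apply this to $\mathcal{V}_\alpha := \mathcal{U} \cup \{W_\beta : \beta < \alpha\}$ for each $\alpha < \kappa$, making the witness selections from a fixed well-ordering of $\tau$ so that $\sigma_\alpha := \tau_{\mathcal{V}_\alpha}$ is monotone in $\alpha$. Each $\sigma_\alpha$ is a Hausdorff group topology with $\sigma_\alpha \leq \tau$ and
$$
\chi(\sigma_\alpha) \leq |\mathcal{V}_\alpha| + \lambda \leq |\alpha| + \lambda < \kappa = \chi(\tau),
$$
forcing $\sigma_\alpha \neq \tau$, and hence $\sigma_\alpha < \tau$. The family $\{\sigma_\alpha\}_{\alpha<\kappa}$ is thus a chain of Hausdorff topologies inside $\{\tau' \in \mathcal{L}_G : \tau' < \tau\}$. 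Its join contains every $W_\beta$ as a $0$-neighbourhood (take any $\alpha > \beta$) and therefore contains the full $\tau$-base at $0$; combined with $\bigvee_\alpha \sigma_\alpha \leq \tau$, this yields $\bigvee_\alpha \sigma_\alpha = \tau$. Consequently no upper bound for this chain can exist in $\{\tau' < \tau\}$, contradicting Property Q; the parenthetical assertion is the case $\lambda = \aleph_0$.

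The principal obstacle is the cardinality bookkeeping in the Pontryagin closure: witnesses for clause (iii) are indexed by pairs $(U, x)$ with $x \in U$, and the estimate $|U| \leq \lambda$ is precisely what prevents the closure from escaping the bound $|\mathcal{V}| + \lambda$. Monotonicity of $\alpha \mapsto \sigma_\alpha$ reduces to choosing canonical witnesses once and for all; after that, the remaining estimates---coarseness, Hausdorffness, and the identification of the join---are routine applications of Pontryagin's Principle and the defining property of the base $\{W_\beta\}$.
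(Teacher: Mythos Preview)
Your argument is correct. The strategy coincides with the paper's---build a chain $(\sigma_\alpha)_{\alpha<\kappa}$ of Hausdorff group topologies below $\tau$, each of character $<\kappa$, whose supremum is $\tau$---but the execution differs. The paper obtains a single Hausdorff $\sigma<\tau$ of character $\le\lambda$ by invoking Arhangel'ski\u{\i}'s condensation theorem \cite{Arh}, and then realises each basic neighbourhood $U_\alpha$ inside a metrizable quotient via \cite[Theorem~3.4.18]{AT}, setting $\tau_\beta=\bigl(\bigvee_{\alpha\le\beta}\sigma_\alpha\bigr)\vee\sigma$. You instead manufacture the small-character pieces directly: starting from $\mathcal{U}\cup\{W_\beta:\beta<\alpha\}$ and closing under the clauses of Pontryagin's Principle (Theorem~\ref{Pont}), using only that $|U|\le\lambda$ to keep the closure at cardinality $\le|\alpha|+\lambda$. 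This is more elementary and fully self-contained within the paper (no appeal to \cite{Arh} or to the metrizable-quotient machinery), at the cost of spelling out the closure bookkeeping; the paper's route is terser but imports two nontrivial external results. One minor wording point: your phrase ``each stage multiplies cardinality by at most $\lambda$'' is literally accurate only when the current family has size $\le\lambda$; for $|\mathcal{V}_\alpha|>\lambda$ the correct statement is that an infinite family of size $\mu\ge\lambda$ stays at size $\mu$ after one stage (since clauses~(iii) and~(v) contribute $\mu\cdot\lambda$ and $\mu^2$ respectively, both equal to $\mu$), which is exactly what your bound $|\mathcal{V}|+\lambda$ records.
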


\begin{proof}
Suppose $\kappa := \chi(G, \tau) > \lambda := |G|$. By Arhangel'ski\u{\i}'s theorem \cite{Arh}, there exists a Hausdorff topology $\sigma < \tau$ with $\chi(G, \sigma) \leq \lambda$. Let $\{U_\alpha : \alpha < \kappa\}$ be a neighborhood base at $0$. For each $\alpha$, construct a continuous homomorphism $f_\alpha: (G, \tau) \to H_\alpha$ onto a metrizable group such that $U_\alpha$ contains a preimage of the unit ball of $0$ in $H_\alpha$ \cite[Theorem 3.4.18]{AT}, so inducing topology $\sigma_\alpha<\tau$ in which $U_\alpha$ is a neighbourhood of $0$. Define transfinite topologies:

\[
\tau_\beta = \left(\bigvee_{\alpha \leq \beta} \sigma_\alpha\right) \vee \sigma \quad \text{for } \beta < \kappa.
\]

Each $\tau_\beta$ satisfies $\chi(G, \tau_\beta) \leq \lambda \cdot |\beta| < \kappa$, hence $\tau_\beta < \tau$. As $\tau = \bigvee_{\alpha<\kappa} \tau_\alpha$, the chain $\{\tau_\alpha\}$ lacks an upper bound in $\{\tau' \in \mathcal{L}_G : \tau' < \tau\}$, contradicting Property Q.
\end{proof}

\begin{proof}[Proof of Theorem \ref{Th1}]
For countable groups $G$, the result follows directly from Lemmas \ref{le0}-\ref{le2}. For uncountable $H$, consider a countable subgroup $G$. Any topology $\lambda$ on $G$ extends uniquely to a topology $\lambda^*$ on $H$ making $G$ open. Let $\Omega \subset \mathcal{L}_G$ be a non-refinable chain avoiding $\mathcal{A}_2(G)$. Then $\Omega^* := \{\lambda^* : \lambda \in \Omega\}$ inherits non-refinability. The existence of a 2-maximal topology $\tau^* \in \Omega^*$ would imply $\tau^*\res_G \in \Omega \cap \mathcal{A}_2(G)$, yielding a contradiction.
\end{proof}

\begin{remark} There is a stronger version of Lemma \ref{le1}. The case $G=\Z$ was proved in \cite{HPTX1}; that proof is also suitable for the following generalized result. For readers' convenience, we provide the proof here.

Let us recall that a family $\mathcal{F}$ of subsets of a countably infinite set $X$ is called to have {\em strongly finite intersection property} if the intersection of any finite members of $\mathcal{F}$ is an infinite set.
A subset $A$ of $X$ is called a {\em pseudo-intersection} of $\mathcal{F}$ if $A\setminus F$ is finite for every $F\in \mathcal{F}$.
We define $\mathfrak{p}$ as the minimal cardinality of a family of subsets of $X$ with strongly finite intersection property but without infinite pseudo-intersection.
It is known that $\omega_1\leq \mathfrak{p}\leq \mathfrak{c}$ (see \cite[Theorem 3.1]{vanD}).

Now suppose that $(G, \tau)$ is a countable abelian group with a maximal group topology.
If $\chi(G, \tau)\leq \mathfrak{p}$, then there exists a \nbd\ base $\mathcal{F}$ at 0 with cardinality $<\mathfrak{p}$.
Let $A\subset G$ be an infinite pseudo-intersection of $\mathcal{F}$; then $A$ is a convergent sequence with limit point $0$.
On the other hand, we have known that $(G, \tau)$ has no non-trivial convergent sequence in the proof of Lemma \ref{le1}.
This contradiction yields that $\chi(G, \tau)>\mathfrak{p}$.

Now consider a $k$-maximal topology $\tau_k$. There exists an non-refinable chain of group topologies
$\tau_1\succ\tau_2\succ...\succ\tau_k$ with $\tau_i$ $i$-maximal, where $1\leq i\leq k$.
By the result of \cite[Theorem 4.5]{HPTX2}, we have the following inequality
$$\chi(G, \tau_1)\leq \chi(G, \tau_2)\leq ... \leq\chi(G, \tau_k).$$
Hence $\chi(G, \tau_k)\geq \mathfrak{p}$.
Using this result and Lemmas \ref{le0} and \ref{le2}, one obtains a direct proof that $G$ does not satisfies $P_k$ for $k\geq 2$.
\end{remark}
\section{Questions and Comments}

\begin{question}\label{q2}
Does the lattice $\mathcal{L}_G$ remain semi-modular or dually semi-modular for solvable groups $G$?
\end{question}

Considering lattices with the \emph{(dual) Birkhoff property} – which implies the Weak Jordan-H\"older Chain Condition for finite chains – we ask:

\begin{question}\label{q3}
For solvable $G$, does $\mathcal{L}_G$ satisfy the (dual) Birkhoff property?
\end{question}
For an infinite abelian group $G$, we define $\mathcal{P}_G$ as the subset of $\mathcal{L}_G$ comprising all meets of families of maximal group topologies (recalling that maximal group topologies are coatoms in $\mathcal{L}_G$ and remain Hausdorff). Formally, this constitutes the collection
\[
\{\tau_{\mathcal{B}} := \bigwedge \mathcal{B} : \mathcal{B} \subseteq \mathcal{A}_1(G)\},
\]
where $\mathcal{A}_1(G)$ denotes the set of maximal group topologies, with $\bigwedge \emptyset$ interpreted as the discrete topology. 

For any $\mathcal{B} \subseteq \mathcal{A}_1(G)$, let $\overline{\mathcal{B}} := \{\tau \in \mathcal{A}_1(G) : \bigwedge \mathcal{B} \leq \tau\}$. Observe that $\tau_{\mathcal{B}} = \tau_{\overline{\mathcal{B}}}$ holds by construction.
 Crucially, any pair $\tau_{\mathcal{B}}, \tau_{\mathcal{C}} \in \mathcal{P}_G$ admits a least upper bound in $\mathcal{P}_G$ given by $\tau_{\overline{\mathcal{B}} \cap \overline{\mathcal{C}}}$, denoted $\tau_{\mathcal{B}} \vee_p \tau_{\mathcal{C}}$. Since $\mathcal{P}_G$ forms a $\wedge$-subsemilattice of $\mathcal{L}_G$, equipping it with the operations
\[
\tau_{\mathcal{B}} \wedge_p \tau_{\mathcal{C}} := \tau_{\mathcal{B}} \wedge \tau_{\mathcal{C}},
\]
we obtain a lattice structure $(\mathcal{P}_G, \vee_p, \wedge_p)$, which we term the \emph{Prodanov lattice} in honor of Iv. Prodanov. His pioneering work on submaximal group topologies \cite{Pro} identified these as the bottom elements of $\mathcal{P}_G$ – the infima of all maximal group topologies. Notably, Prodanov demonstrated that the submaximal topology refines the Bohr topology (i.e., the finest precompact topology) through explicit neighborhood basis construction at the identity. This result establishes that all elements of $\mathcal{P}_G$ are necessarily Hausdorff.

\begin{question}\label{QPro}
For an infinite abelian group $G$:
\begin{itemize}
    \item[(i)] Is $\mathcal{P}_G$ a sublattice of $\mathcal{L}_G$?
    \item[(ii)] Must every $k$-maximal group topology ($k > 1$) belong to $\mathcal{P}_G$? Equivalently, is every $k$-maximal topology expressible as the meet of $k$ maximal topologies?
    \item[(iii)] Do all maximal chains in $\mathcal{P}_G$ share equal cardinality?
    \item[(iv)] What is the supremum of cardinalities of chains in $\mathcal{P}_G$?
    \item[(v)] Is $\mathcal{P}_G$ freely generated by $\mathcal{A}_1(G)$? Specifically, does $\overline{\mathcal{B}} = \mathcal{B}$ hold for all $\mathcal{B} \subseteq \mathcal{A}_1(G)$?
\end{itemize}
\end{question}

We remark that condition (v) fails generically. Consider isomorphic infinite abelian groups $H_1, H_2$ with $K = H_1 \oplus H_2$, and let $H_3 = \{a + \varphi(a) : a \in H_1\}$ for an isomorphism $\varphi: H_1 \to H_2$. Given maximal topologies $\lambda_i$ on $H_i$ ($i = 1,2,3$), their extensions $\tau_i$ to $K$ satisfy:
\[
\tau_1 \wedge \tau_2 = \tau_2 \wedge \tau_3 = \tau_3 \wedge \tau_1,
\]
with topological basis $\{U + V : U \in \tau_1, V \in \tau_2\}$. Consequently, 
\[
\overline{\{\tau_1, \tau_2\}} = \overline{\{\tau_2, \tau_3\}} = \overline{\{\tau_3, \tau_1\}},
\]
invalidating (v) for $K$. For any abelian group $G$ containing $K$, the lattice embedding $\mathcal{L}_K \hookrightarrow \mathcal{L}_G$ via $\sigma \mapsto \sigma'$ (where $\sigma'$ extends $\sigma$ with $K$ open) preserves this failure. Thus, groups satisfying (v) cannot contain $\bigoplus_\omega \mathbb{Z}(p)$ or $\mathbb{Z} \oplus \mathbb{Z}$, implying finite $p$-rank and free-rank $\leq 1$. This motivates:

\begin{question}
Does $\mathbb{Z}$ satisfy condition (v) in Question \ref{QPro}?
\end{question}



\end{document}